\newcommand{\be}{\begin{equation}}
\newcommand{\ee}{\end{equation}}
\newcommand{\beq}{\begin{eqnarray}}
\newcommand{\eeq}{\end{eqnarray}}
\newtheorem{prop}{Proposition}[section]
\newtheorem{theo}[prop]{Theorem}
\newtheorem{lemm}[prop]{Lemma}
\newtheorem{rema}[prop]{Remark}
\newtheorem{defi}[prop]{Definition}
\def\begeq{\begin{equation}}
\def\endeq{\end{equation}}
\begin{document}

\title{A Note on the Entropy of Mean Curvature Flow}

\begin{abstract}
The entropy of a hypersurface is given by the supremum over all F-functionals with varying centers and scales, and is invariant under rigid motions and dilations. As a consequence of Huisken's monotonicity formula, entropy is non-increasing under mean curvature flow. We show here that a compact mean convex hypersurface with some low entropy is diffeomorphic to a round sphere. We will also prove that a smooth self-shrinker with low entropy is exact a hyperplane.
\end{abstract}

\keywords{entropy, self-shrinker, mean curvature flow, sphere}
\renewcommand{\subjclassname}{\textup{2010} Mathematics Subject Classification}
\subjclass[2010]{Primary 53C25; Secondary 58J05}
\author{Chao Bao}

\address{Chao Bao, Key Laboratory of Pure and Applied mathematics, School of Mathematics Science, Peking University,
Beijing, 100871, P.R. China.} \email{chbao@126.com}

\date{2014}
\maketitle

\markboth{Chao Bao}{}

 \maketitle

 \section{Introduction}

 The F-functional of a hypersurface $\Gamma \subset \textbf{R}^{n+1}$ is defined as
 $$F(\Gamma) = (4 \pi)^{-n/2} \int_{\Gamma} e^{-\frac{|x|^2}{4}}$$
 whereas the entropy of $\Gamma$ is given by
 \begin{equation}
\lambda(\Gamma) = \sup_{x_0 \in \textbf{R}^{n+1}, t_0 >0} (4 \pi t_0)^{-n/2} \int_{\Gamma} e^{-\frac{|x-x_0|^2}{4t_0}}
 \end{equation}
 If taking a transformation of the integral, we can also get
 \begin{equation}
   \lambda(\Gamma) = \sup_{x_0 \in \textbf{R}^{n+1}, t_0 >0} (4 \pi)^{-n/2} \int_{t_0\Gamma + x_0} e^{-\frac{|x|^2}{4}}
 \end{equation}
 By section 7 in \cite{CM}, the entropy of a self-shrinker is equal to the value of the F-functional $F$ and thus no supremum is needed. In \cite{S}, Stone computed the entropy for generalized cylinders $\textbf{S}^k \times \textbf{R}^{n-k}$. He showed that $\lambda(\textbf{S}^n)$ is decreasing in $n$ and
 $$\lambda(\textbf{S}^1) = \sqrt{\frac{2\pi}{e}} \approx 1.5203 > \lambda(\textbf{S}^2) = \frac{4}{e} \approx 1.4715 > \lambda(\textbf{S}^3) > \cdots > 1 = \lambda(\textbf{R}^{n})$$
 Moreover, a simple computation shows that $\lambda(\Sigma \times \textbf{R}) = \lambda(\Sigma)$.

 Mean curvature flow is a parameter family of hypersurfaces $\{M_t\} \subset \textbf{R}^{n+1}$ which evolves under the following equation:
 \begin{equation}
   (\partial_t X(p,t))^{\perp} = - H(p,t) \nu(p,t)
 \end{equation}
 Here $\overrightarrow{H} = - H \nu$ is the mean curvature vector of $M_t$, $H = div_{M_t} \nu$, $\nu$ is the outward unit normal, $X$ is the position vector and $\cdot^{\perp}$ denotes the projection on the normal space.

 We denote $\Phi(x,t) = (4 \pi t)^{-n/2} e^{-\frac{|x|^2}{4t}}$ and $\Phi_{(y,\tau)} = \Phi(x-y, \tau - t)$, Huisken's monotonicity implies that for any $(y,\tau) \in \textbf{R}^{n+1} \times \textbf{R}$, $t_1$ and $t_2$ with $t_2 < t_1 < \tau$ we have
 \begin{equation}
   \int_{M_{t_1}} \Phi_{(y,\tau)} \leq \int_{M_{t_2}} \Phi_{(y,\tau)}
 \end{equation}
 As a consequence of Huisken's monotonicity formula, entropy is non-increasing under mean curvature flow.

 A hypersurface $\Gamma \subset \textbf{R}^{n+1}$ is a self-shrinker if it satisfies
\begin{equation}\label{selfshrinker}
  H = \frac{\langle X,\nu \rangle}{2}
\end{equation}
It can be proved that, if $\Gamma$ is a self-shrinker, then $\Gamma_t = \sqrt{-t} \Gamma$ satisfies the mean curvature flow equation, see lemma 2.2 in \cite{CM}.

A non-compact hypersurface $\Sigma \subset \textbf{R}^{n+1}$ is said to be with polynomial volume growth if there are constants $C$ and $d$ so that for all $r \geq 1$
\begin{equation}
  Vol(B_r(0) \cap \Sigma) \leq Cr^d.
\end{equation}
where $B_r(0)$ denote the ball centered at origin $0$ with radius $r$ in $\textbf{R}^{n+1}$.

In \cite{H}, Huisken showed that mean curvature flow stating at any smooth compact convex initial hypersurface in $\textbf{R}^{n+1}$ remains convex and smooth until it becomes extinct at a point and if we rescale the flow about the point in space-time where it becomes extinct, then the rescalings converge to round spheres. In \cite{HS}, Huisken and Sinestrari developed a theory for mean curvature flow with surgery for two-convex hypersurfaces in $\textbf{R}^{n+1} (n \geq 3)$, and classified all of the closed two-convex hypersurfaces. In \cite{CM}, Colding and Minicozzi found a piece-wise mean curvature flow, under which they could prove that assuming a uniform diameter bound the piece-wise mean curvature flow starting from any closed surface in $\textbf{R}^3$ will become extinct in a round point.

Inspired by \cite{CIMW}, we expect to study hypersurfaces  perspective from entropy, i.e. whether we can classify all of the mean convex hypersurfaces under some entropy condition. Specially, when the entropy of a closed mean convex hypersurface is no more than $\lambda(\textbf{S}^{n-2})$, whether we can classify all of this kind of hypersurfaces like the result of Huisken and Sinestrari, see \cite{HS}. As a first step to our goal, in this note, we will prove that under some entropy condition a mean convex closed hypersurface is diffeomorphic to a round sphere.

It seems to the author that, entropy plays  similar roles as energy does in harmonic map theory. For example, in harmonic map theory one has $\epsilon$-regularity theorem \cite{L} \cite{SW}, Liouville type theorem for harmonic maps with small energy \cite{EL}, and uniqueness of harmonic maps with small energy \cite{mS} etc. If comparing self-shrinkers as harmonic maps one has similar results on the entropies of self-shrinkers. So it also motivates the author to do this work.

\begin{theo}\label{main2}
  Suppose $M_0 \subset \textbf{R}^{n+1}$ is a smooth closed embedded hypersurface with mean curvature $H > 0$. If $\lambda(M_0) \leq \min\{\lambda(\textbf{S}^{n-1}), \frac{3}{2}\}$, then it is diffeomorphic to a round sphere $\textbf{S}^n$.
\end{theo}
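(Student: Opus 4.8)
The plan is to run the mean curvature flow $\{M_t\}_{t\in[0,T)}$ with $M_0$ as initial data and to analyze its \emph{first} singular time $T$; since $M_0$ is closed, $T<\infty$ by comparison with a large enclosing sphere. Mean convexity and embeddedness are preserved, so on $[0,T)$ we have a smooth embedded mean convex flow, and, this being a smooth one-parameter family of embeddings, each slice $M_t$ with $t<T$ is diffeomorphic to $M_0$. Huisken's monotonicity formula forces the entropy to be non-increasing, hence $\lambda(M_t)\le\lambda(M_0)\le\min\{\lambda(\textbf{S}^{n-1}),\tfrac{3}{2}\}$ for every $t\in[0,T)$. It therefore suffices to prove that $M_{T-\epsilon}$ is diffeomorphic to a round sphere for small $\epsilon>0$; no information beyond the first singular time will be needed.

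Next I would analyze tangent flows at $T$. Fix a singular point $x_0$ and parabolically rescale the flow about $(x_0,T)$. By Huisken's monotonicity together with the compactness and regularity theory for embedded mean convex mean curvature flow (Huisken, White, Huisken--Sinestrari, Haslhofer--Kleiner), a subsequence converges to a tangent flow induced by a smooth complete embedded self-shrinker $\Sigma\subset\textbf{R}^{n+1}$ of multiplicity one, with polynomial volume growth, uniformly bounded second fundamental form, $H\ge0$ (mean convexity passes to the limit), and $\lambda(\Sigma)\le\lambda(M_0)$. By the classification of mean convex self-shrinkers --- Huisken for the compact case and Colding--Minicozzi in general under the volume and curvature bounds just recorded --- we get $\Sigma=\textbf{S}^k\times\textbf{R}^{n-k}$ for some $0\le k\le n$, and $k\ge1$ because a genuine singularity cannot be modeled on a hyperplane. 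Then $\lambda(\Sigma)=\lambda(\textbf{S}^k\times\textbf{R}^{n-k})=\lambda(\textbf{S}^k)$. If $k=1$ this gives $\lambda(\textbf{S}^1)=\sqrt{2\pi/e}>\tfrac{3}{2}\ge\lambda(M_0)$, impossible. If $2\le k\le n-1$, Stone's monotonicity gives $\lambda(\textbf{S}^k)\ge\lambda(\textbf{S}^{n-1})\ge\lambda(M_0)\ge\lambda(\Sigma)=\lambda(\textbf{S}^k)$, so equality holds throughout; in particular equality holds in Huisken's monotonicity for the original flow centered at $(x_0,T)$, which makes $M_0$ a closed self-shrinker with $H>0$, hence the round sphere $\textbf{S}^n$ by Huisken, and the theorem is already proved. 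In the remaining case $k=n$: every tangent flow at every singular point of $M_T$ is a multiplicity-one shrinking round $\textbf{S}^n$.

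To conclude, invoke the structure theory of mean convex flow at a spherical singularity (essentially Huisken's convergence theorem for the asymptotic shape, cf. White): since the tangent flow at $x_0$ is a multiplicity-one shrinking sphere, there is $r>0$ such that on $B_r(x_0)\times(T-r^2,T)$ the parabolically rescaled flow is as close as we like to the shrinking sphere $\{\textbf{S}^n_{\sqrt{2n(T-t)}}(x_0)\}$. Hence for $\epsilon>0$ small, $M_{T-\epsilon}\cap B_r(x_0)$ is a closed convex hypersurface $\epsilon$-close to a small round sphere, lying well inside $B_r(x_0)$ and disjoint from the still-smooth remainder of $M_{T-\epsilon}$; being a compact hypersurface sitting inside the hypersurface $M_{T-\epsilon}$, it is a union of connected components of $M_{T-\epsilon}$, hence a single component. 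Since $M_0$ may be assumed connected (otherwise run the argument on each component), so is $M_{T-\epsilon}$, whence $M_{T-\epsilon}$ equals this convex sphere and $M_0\cong M_{T-\epsilon}\cong\textbf{S}^n$.

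The principal obstacle is the tangent-flow step: one must know that blow-ups of an embedded \emph{mean convex} flow are smooth, multiplicity-one, mean convex self-shrinkers with uniformly bounded geometry, for only then does the Colding--Minicozzi classification apply and only then is the picture at a spherical singularity as clean as claimed --- this is precisely where mean convexity (through the attendant curvature and non-collapsing estimates, which fail without it) is indispensable. A secondary, lower-order point is the sharp entropy bookkeeping: the numerical fact $\lambda(\textbf{S}^1)=\sqrt{2\pi/e}>\tfrac{3}{2}$ is exactly what necessitates carrying the auxiliary bound $\lambda(M_0)\le\tfrac{3}{2}$ (binding only when $n=2$), and the borderline comparison $\lambda(\textbf{S}^{n-1})=\lambda(M_0)$ must be absorbed via the rigidity case of Huisken's monotonicity.
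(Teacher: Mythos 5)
Your proposal is correct in outline and follows the same skeleton as the paper: flow to the first singular time, blow up at a singular point, identify the tangent flow as a generalized cylinder $\textbf{S}^k\times\textbf{R}^{n-k}$, use Stone's values together with monotonicity and lower semicontinuity of entropy to exclude $1\le k\le n-1$ (absorbing the borderline case $\lambda(\Gamma_{-1})=\min\{\lambda(\textbf{S}^{n-1}),\tfrac32\}$ through the rigidity case of Huisken's monotonicity, exactly as the paper does), and then transfer the spherical picture back to $M_0$. The genuine difference is how the crucial middle step is justified. You import the full structure theory of embedded mean convex flow (White, Haslhofer--Kleiner) as a black box to get that tangent flows are smooth, multiplicity-one, convex shrinkers with bounded geometry; the paper instead reaches this with lighter, more targeted tools: Huisken's preserved pinching $|A|^2\le CH^2$ (Lemma \ref{pinchlemma}, scale invariant, hence inherited by the rescalings), Ecker's local regularity theorem upgrading Radon-measure convergence to smooth convergence on the regular part of the tangent flow (Lemma \ref{mainlemma}), the Colding--Ilmanen--Minicozzi--White theorem that a weak shrinker with entropy below $\tfrac32$ and $|A|\le CH$ is smooth (Theorem \ref{CM2}), and then the Colding--Minicozzi classification of mean convex shrinkers. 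This shifts where the hypothesis $\lambda(M_0)\le\tfrac32$ does its work: in your account it only rules out $\textbf{S}^1$-cylinders (binding for $n=2$), whereas in the paper it is also the regularity hypothesis of Theorem \ref{CM2}, i.e. it is what makes the tangent flow smooth in the first place; with your machinery that smoothness comes for free, which is why your route is shorter but rests on much heavier external theorems. Your final step (spherical multiplicity-one tangent flow, hence a closed piece of the rescaled $M_{T-\epsilon}$ graphical over a small round sphere, hence by connectedness $M_0$ diffeomorphic to $\textbf{S}^n$) is in substance the paper's second use of Lemma \ref{mainlemma}; note that both arguments use connectedness of $M_0$ silently --- the statement read verbatim fails for a disjoint union of two far-separated round spheres, whose entropy lies below the threshold --- so your parenthetical about arguing componentwise is the honest reading of the theorem and could be made explicit.
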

Moreover, we can get the following Bernstein type theorem for self-shrinkers under some low entropy condition.
\begin{theo}\label{main1}
  Suppose $\Gamma$ is a smooth non-compact embedded self-shrinker with polynomial volume growth, there exists a constant $\epsilon >0$, such that if $\lambda(\Gamma)< 1+ \epsilon$ then $\Gamma$ must be a hyperplane.
\end{theo}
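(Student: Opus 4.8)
The plan is to read off the rigidity directly from White's local regularity theorem for mean curvature flow, using that $\Gamma$ generates a self-similar shrinking flow. The mechanism is simple: a low entropy bound forces \emph{every} Gaussian density ratio of that flow, at every base point and every scale, to be close to $1$, and White's theorem converts such a uniform density bound into a curvature estimate which, once the scale is sent to infinity, is forced to vanish.

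First I would set up the flow. By Lemma 2.2 of \cite{CM}, $M_t := \sqrt{-t}\,\Gamma$, $t\in(-\infty,0)$, is a smooth mean curvature flow, smooth on all of $\textbf{R}^{n+1}\times(-\infty,0)$ and singular only at $t=0$. Since $\Gamma$ has polynomial volume growth, Huisken's monotonicity formula is available and the relevant Gaussian integrals are finite; hence for any spacetime point $(x_0,t_0)$ (with $t_0\le 0$) and any scale $r>0$, evaluating the weight $\Phi_{(x_0,t_0)}$ on $M_{t_0-r^2}$ gives
$$\Theta\big(M,(x_0,t_0),r\big)=\int_{M_{t_0-r^2}}\Phi_{(x_0,t_0)}\ \le\ \lambda\big(M_{t_0-r^2}\big)=\lambda\big(c\,\Gamma\big)=\lambda(\Gamma),$$
where $c=\sqrt{r^2-t_0}>0$ and the last equality is the dilation invariance of the entropy. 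Thus every Gaussian density ratio of the flow $M$ is at most $\lambda(\Gamma)$.

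Next I would invoke White's local regularity theorem: there are constants $\epsilon=\epsilon(n)>0$ and $C=C(n)>0$ such that if a smooth mean curvature flow on a parabolic ball $B_\rho(y)\times(s-\rho^2,s]$ has all of its Gaussian density ratios bounded by $1+\epsilon$, then $|A|(y,s)\le C/\rho$. Take $\epsilon$ to be this dimensional constant. Given $\lambda(\Gamma)<1+\epsilon$, fix an arbitrary $p\in\Gamma=M_{-1}$. For every $\rho>0$ the flow $M$ is smooth on $B_\rho(p)\times(-1-\rho^2,-1]$ and, by the previous step, all of its density ratios there are $\le\lambda(\Gamma)<1+\epsilon$; hence $|A_\Gamma|(p)=|A|(p,-1)\le C/\rho$. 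Letting $\rho\to\infty$ gives $|A_\Gamma|(p)=0$, and since $p$ was arbitrary $\Gamma$ is totally geodesic, hence contained in an affine hyperplane. The self-shrinker equation \eqref{selfshrinker} then forces $H\equiv 0$, so $\langle X,\nu\rangle\equiv 0$ and the hyperplane passes through the origin; embeddedness and non-compactness identify $\Gamma$ with that hyperplane.

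The step requiring care, and the one I expect to be the crux, is the uniform control of all of the flow's Gaussian density ratios by $\lambda(\Gamma)$: this is precisely where polynomial volume growth enters, both to validate Huisken's monotonicity on the noncompact shrinker and to guarantee the Gaussian integrals are finite and transform correctly under the self-similar scaling. An alternative to citing White's theorem would be a contradiction argument: from a hypothetical sequence of non-planar self-shrinkers with $\lambda\to 1$, use a local $\epsilon$-regularity estimate to obtain uniform curvature bounds, apply the Colding--Minicozzi smooth compactness theorem for self-shrinkers to extract a limit with $\lambda=1$ (necessarily a hyperplane, since $\lambda\ge 1$ for every hypersurface), and then invoke a uniqueness/rigidity statement to exclude the sequence. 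This route is longer and needs an extra uniqueness input, so the direct argument above is preferable.
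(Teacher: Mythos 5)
Your argument is correct, and its skeleton is the same as the paper's: the entropy bound controls every Gaussian density ratio of the self-similar flow at every center and scale (each such ratio is literally one of the F-functionals in the supremum defining $\lambda$, and dilation invariance gives $\lambda(M_t)=\lambda(\Gamma)$), and then an $\epsilon$-regularity theorem turns the bound $1+\epsilon$ into a curvature estimate $|A|\le C/\rho$ which dies as $\rho\to\infty$. The genuine difference is in how the scale-invariant curvature estimate is justified. You cite White's local regularity theorem in its sharp form, with $\epsilon=\epsilon(n)$ and $C=C(n)$ depending only on the dimension and the estimate valid at every parabolic scale $\rho$; this is legitimate (White's Annals paper states exactly such dimensional constants, and the estimate at arbitrary scale follows from the unit-scale statement by parabolic rescaling), and it makes the proof short. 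The paper deliberately does not take this shortcut: it quotes the regularity theorem in Ecker's formulation, whose constant $c_0$ has unclear dependence on the domain $U$ and the time interval (the paper's Remark makes this point explicitly), and therefore proves its own substitute, Lemma \ref{thm1lemm} --- a curvature estimate for ancient flows with all densities below $1+\epsilon$, with constant independent of the radius $R$ and of the flow, established by a point-picking blow-up contradiction argument in the style of Ecker's proof (the rescaled limit flow has Gaussian integral identically $1$, hence is a static plane by the rigidity case of Huisken's monotonicity, contradicting $|A(0)|=1$). That lemma is the real content of the paper's proof of Theorem \ref{main1}; your version buys brevity by outsourcing precisely that content to White's theorem, while the paper's version is self-contained modulo Ecker's book. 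Your concluding step ($|A|\equiv 0$, then $H=\tfrac{1}{2}\langle X,\nu\rangle=0$ forces the hyperplane through the origin) matches the paper, with the same implicit use of completeness and connectedness to pass from totally geodesic to an entire hyperplane.
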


It should be pointed out that, under the assumption $\lambda(M_0) < 2$, it is easy to check that all tangent flows must be multiplicity one, and as a sequel, we will not need to mention this in the proof of main theorems. In the proof of main theorems, we will use similar techniques from \cite{E}, \cite{CIMW} etc.

{\bf Acknowledgements} The author is very grateful to Professor Yuguang Shi for discussing this result and many helpful comments on this problem.

\section{Tangent flows of mean curvature flows}Throughout this paper, unless otherwise mentioned, we will always assume $M_0$ is a smooth closed embedded hypersurface in $\textbf{R}^{n+1}$, and $\{M_t\}$ is a mean curvature flow starting from $M_0$.

Let $(x_0, t_0) \in \textbf{R}^{n+1} \times \textbf{R}$ be a fixed point in the space-time, and $\lambda > 0$ be a positive constant in $\textbf{R}$. We say that  $\{M_s^{\lambda}\}$ is a parabolic rescaling of $\{M_t\}$ at $(x_0 , t_0)$ if it satisfies
\begin{equation}
  M^{\lambda}_s = \lambda^{-1}(M_{\lambda^{2}s + t_0} - x_0)
\end{equation}
 where $s \in (-\lambda^{-2} t_0, 0)$. It is easy to check that $\{M_s^{\lambda}\}$ also satisfies mean curvature flow equation. For any hypersurface $M$ in $\textbf{R}^{n+1}$, we say $x_0$ is a regular point of $M$, if there is an open neighbourhood $U_0 \subset \textbf{R}^{n+1}$ of $x_0$, such that $M$ is smooth in $M \cap U_0$. Moreover, we say $M$ is regular, if every point of $M$ is a regular point.

 \begin{defi}
   We say that a parameter of hypersurfaces $\{\Gamma_s\}_{s<0}$ is a tangent flow of $\{M_t\}$, if there exists a sequence of positive numbers $\{\lambda_j\}$, $\lambda_j \rightarrow$ 0 as $j \rightarrow \infty$, such that $M^{\lambda_j}_s \hookrightarrow \Gamma_s$ as Randon measures for each $s<0$.
 \end{defi}
 We will denote $M^j_s = M^{\lambda_j}_s$ for simplicity without confusion. About the existence of tangent flows, we have the following lemma:
 \begin{lemm}[see \cite{I2}] \label{tang.exist}
   Suppose $\{M_t\}$ is a mean curvature flow, and $M_0$ is a smooth embedded hypersurface, then for any time-space point $(x_0,t_0) \in \textbf{R}^{n+1} \times \textbf{R}$ there is a parameter of hypersurfaces $\{\Gamma_s\}_{s<0}$ and a sequence of positive numbers $\{\lambda_j\}$, $\lambda_j \rightarrow$ 0 as $j \rightarrow \infty$, such that $M^{j}_s \hookrightarrow \Gamma_s$ as Radon measures for each $s<0$.
 \end{lemm}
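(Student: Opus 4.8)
The plan is to realize the tangent flow as a weak subsequential limit of parabolic rescalings, the compactness coming from Huisken's monotonicity formula and the finiteness of entropy. After a translation in space we may assume $x_0=0$, so that $M^\lambda_s=\lambda^{-1}M_{\lambda^2 s+t_0}$ and each rescaled family is again a mean curvature flow. First I would note that, $M_0$ being smooth and compact, its entropy $\lambda(M_0)$ is finite, and that entropy is invariant under rigid motions and dilations and non-increasing along the flow; hence $\lambda(M^\lambda_s)=\lambda(M_{\lambda^2 s+t_0})\le\lambda(M_0)$, and evaluating the supremum in the definition of entropy at center $0$ and scale $-s$ gives
\[
\int_{M^\lambda_s}\Phi(x,-s)\,d\mathcal H^n(x)\le \lambda(M^\lambda_s)\le \lambda(M_0)\qquad\text{for all }\lambda>0,\ s<0 .
\]
Since $\Phi(\cdot,-s)\ge c(R,s)>0$ on every ball $B_R(0)$, this yields uniform local area bounds $\mathcal H^n\big(M^\lambda_s\cap B_R(0)\big)\le C(R,s)\,\lambda(M_0)$, independent of $\lambda$ (and the same holds around any fixed ball).

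Fix a sequence $\lambda_j\to 0$ and let $\mu^j_s$ be the Radon measure on $\textbf{R}^{n+1}$ associated with the hypersurface $M^j_s$. By the bounds just obtained, for each fixed $s<0$ the family $\{\mu^j_s\}_j$ is precompact in the weak-$*$ topology of Radon measures. Choosing a countable dense subset $\{s_k\}\subset(-\infty,0)$ and running a diagonal extraction, I obtain a single subsequence, not relabeled, along which $\mu^j_{s_k}\hookrightarrow\mu_{s_k}$ for every $k$. To upgrade this to convergence at \emph{every} $s<0$, I would use that the rescaled flows satisfy Brakke's inequality with $j$-independent constants: for each fixed nonnegative $\varphi\in C_c^\infty(\textbf{R}^{n+1})$, the function $s\mapsto\int\varphi\,d\mu^j_s$ has variation on any compact interval $[a,b]\subset(-\infty,0)$ bounded uniformly in $j$ (this is the first variation/monotonicity estimate applied on a fixed ball containing $\operatorname{supp}\varphi$). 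A Helly-type argument then extends the limit to a family $\{\mu_s\}_{s<0}$ with $\mu^j_s\hookrightarrow\mu_s$ for all $s<0$, and taking $\Gamma_s$ to be the support of $\mu_s$ produces the desired family.

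The main obstacle is exactly this last step: producing one subsequence valid for all $s$ simultaneously and showing the time-slices assemble into a genuine flow rather than a merely measurable family of measures. This is where one invokes the structure and compactness theory for Brakke flows --- the uniform local area bounds together with Huisken's monotonicity place the rescaled flows in the hypotheses of Ilmanen's compactness theorem \cite{I2}, whose limit is an integral Brakke flow defined for all $s<0$ and weakly continuous in $s$ off a negligible set of times; that is the statement being cited. Finally, one records --- as noted in the introduction under the assumption $\lambda(M_0)<2$ --- that the Gaussian density of the limit is then strictly below $2$, forcing the limiting varifold to have multiplicity one; this unit multiplicity is not needed for existence but is what later allows the tangent flow to be identified with a smooth self-shrinker in the proofs of Theorems~\ref{main2} and~\ref{main1}.
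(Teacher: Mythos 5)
The paper offers no proof of this lemma at all---it is quoted directly from Ilmanen \cite{I2}---and your sketch (uniform local area bounds from the entropy bound via Huisken's monotonicity, weak-$*$ compactness of the slice measures, a diagonal/Helly-type argument in time, and Brakke/Ilmanen compactness to obtain a single subsequence converging for every $s<0$) is precisely the standard argument behind that citation. So your proposal is correct in outline and takes essentially the same route as the paper, with the only genuinely delicate step (simultaneous convergence at all times and the flow structure of the limit) deferred, exactly as in the paper, to \cite{I2}.
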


 Moreover, by Lemma 8 of \cite{I1}, we know that $\Gamma_s = \sqrt{-s}\Gamma_{-1}$, and $\Gamma_{-1}$ is a weak solution of self-shrinker equation (\ref{selfshrinker}). Furthermore by Huisken's monotonicity formula, we can prove the following point-wise convergence lemma:

 \begin{lemm} \label{tan.conv}
   If $\{\Gamma_s\}_{s<0}$ is a tangent flow of $\{M_t\}$ at $(x_0,t_0)$, and $\{M^j_s\}$ is the corresponding sequence of parabolic transformation of $\{M_t\}$, then $\{M^j_s\}$ converge to $\{\Gamma_s\}$ as Hausdorff distance for each $s<0$.
 \end{lemm}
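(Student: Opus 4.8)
The plan is as follows. Fix $s<0$. Since each $M^j_s$ is compact while $\Gamma_s$ — which we regard as the support of the limit Radon measure $\mu_s$, where $M^j_s\hookrightarrow\mu_s$ — may be non-compact, Hausdorff convergence is to be understood locally, so it suffices to prove that for every $R>1$ and every $\epsilon>0$ one has, for all large $j$, both $\Gamma_s\cap\overline{B_R}\subset N_\epsilon(M^j_s)$ and $M^j_s\cap\overline{B_R}\subset N_\epsilon(\Gamma_s)$, where $N_\epsilon(\cdot)$ is the $\epsilon$-neighbourhood. The first inclusion is immediate from the weak convergence $M^j_s\hookrightarrow\mu_s$: if $x\in\Gamma_s$ then $\mu_s(B_\epsilon(x))>0$, so by lower semicontinuity of mass on open sets $\mathcal{H}^n(M^j_s\cap B_\epsilon(x))>0$ for $j$ large, i.e. $M^j_s$ meets $B_\epsilon(x)$; a finite subcover of $\Gamma_s\cap\overline{B_R}$ by such balls makes this uniform in $x$.

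The substance is the second inclusion, and here I would argue by contradiction. If it fails, then along a subsequence there are $x_j\in M^j_s\cap\overline{B_R}$ with $\mathrm{dist}(x_j,\Gamma_s)\ge\epsilon$, and after a further subsequence $x_j\to x_\infty\in\overline{B_R}$ with $\mathrm{dist}(x_\infty,\Gamma_s)\ge\epsilon$. I first record two consequences of Huisken's monotonicity formula, both uniform in $j$ because $\lambda(M^j_0)=\lambda(M_0)$ is scale-invariant: (i) the local area bound $\mathcal{H}^n(M^j_\sigma\cap B_r(y))\le C(n)\,\lambda(M_0)\,r^n$ for all $y$, $r$, all $\sigma<0$ and all $j$; and (ii), because $x_j\in M^j_s$, the Gaussian density ratio $\Theta^j(x_j,s,r):=(4\pi r^2)^{-n/2}\int_{M^j_{s-r^2}}e^{-|y-x_j|^2/4r^2}\,d\mathcal{H}^n(y)$ is non-decreasing in $r$ with $\lim_{r\to0}\Theta^j(x_j,s,r)\ge1$, hence $\Theta^j(x_j,s,r)\ge1$ for every $r>0$ and every $j$. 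Now I fix a small $\delta>0$, take $r=\delta$ in (ii), split the integral over $B_{K\delta}(x_j)$ and its complement, and bound the complement by decomposing it into dyadic annuli and using (i) against the Gaussian decay; the $\delta$'s cancel, so one gets a constant $K=K(n,\lambda(M_0))$, independent of $\delta$ and $j$, for which $\mathcal{H}^n\big(M^j_{s-\delta^2}\cap B_{K\delta}(x_j)\big)\ge c(n)\,\delta^n$. Passing $j\to\infty$ at the time $s-\delta^2<0$, where $M^j_{s-\delta^2}\hookrightarrow\mu_{s-\delta^2}$, and using $x_j\to x_\infty$ together with upper semicontinuity of mass on compact sets, gives $\mu_{s-\delta^2}\big(\overline{B_{2K\delta}(x_\infty)}\big)\ge c(n)\delta^n>0$, so $\mathrm{dist}(x_\infty,\Gamma_{s-\delta^2})\le 2K\delta$. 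Finally, since $\Gamma_\sigma=\sqrt{-\sigma}\,\Gamma_{-1}$, the slice $\Gamma_{s-\delta^2}$ is the dilation of $\Gamma_s$ by the factor $\sqrt{1+\delta^2/|s|}=1+O(\delta^2)$, so $d_H\big(\Gamma_{s-\delta^2}\cap\overline{B_{R+1}},\,\Gamma_s\cap\overline{B_{R+2}}\big)\le C(R,s)\,\delta^2$; combining, $\mathrm{dist}(x_\infty,\Gamma_s)\le 2K\delta+C(R,s)\delta^2$, and letting $\delta\to0$ forces $x_\infty\in\Gamma_s$, a contradiction. This proves the second inclusion, and with it the lemma.

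The main obstacle is the lower mass bound in the middle step, and specifically the fact that it is only available at a \emph{slightly earlier} time $s-\delta^2$ rather than at time $s$ itself: a shrinking sphere of tiny radius shows that $\mathcal{H}^n(M^j_s\cap B_r(x_j))$ need not be bounded below by $c\,r^n$ even under a good entropy bound, so Huisken's monotonicity formula — which controls the density at $(x_j,s)$ only through the earlier time-slices — is exactly the right tool, and the continuity of $\sigma\mapsto\Gamma_\sigma$ is genuinely needed to transfer the information back to time $s$. The remaining points requiring care are the uniformity in $j$ of (i) and (ii) (both resting on $\lambda(M^j_0)=\lambda(M_0)$), the meaning of Hausdorff convergence when $\Gamma_s$ is non-compact (one works on each fixed ball), and the nonemptiness bookkeeping near $\partial B_R$; one may also present the lower mass bound as an instance of Brakke's clearing-out lemma packaged through the monotonicity formula.
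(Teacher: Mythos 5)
Your argument is correct, and it rests on the same engine as the paper's proof: Huisken's monotonicity applied to the rescaled flows, the fact that the Gaussian density ratio at a smooth point $x_j\in M^j_s$ is at least $1$ at every scale, uniform local area bounds, and the Radon measure convergence $M^j_s\hookrightarrow\Gamma_s$ (the paper gets the area bounds from Lemma 2.9 of \cite{CM} rather than from scale-invariance of the entropy, but that is immaterial). Where you diverge is in how the density information is transferred to the limit. The paper takes $x_j\to y$, passes the full Gaussian integrals with moving centers to the limit, $\int_{M^j_s}\Phi_{(x_j,s_0)}\to\int_{\Gamma_s}\Phi_{(y,s_0)}\ge 1$, and then lets $s\nearrow s_0$, invoking the fact that this limit would vanish if $y\notin\Gamma_{s_0}$ (its equations (2.10)--(2.11)); you instead convert the density bound at scale $\delta$ into a quantitative clearing-out type lower mass bound $\mathcal{H}^n(M^j_{s-\delta^2}\cap B_{K\delta}(x_j))\ge c(n)\delta^n$ at the slightly earlier time $s-\delta^2$, pass that to the limit measure by upper semicontinuity on compact sets, and then use the self-similar structure $\Gamma_\sigma=\sqrt{-\sigma}\,\Gamma_{-1}$ to move from $\Gamma_{s-\delta^2}$ back to $\Gamma_s$ before sending $\delta\to 0$. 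Your version is more quantitative and makes explicit two points the paper glosses over: that monotonicity only controls earlier time slices (your shrinking-sphere remark), so some continuity in time of the tangent flow is genuinely needed, and that the easy inclusion $\Gamma_s\cap\overline{B}_R\subset N_\epsilon(M^j_s)$, together with the interpretation of Hausdorff convergence on a noncompact limit as local convergence, has to be recorded separately (the paper dismisses this with ``it is easy to see''). The paper's route is slightly shorter since it never needs the dilation/continuity step, at the cost of justifying convergence of Gaussian integrals with moving centers and the vanishing statement for points off the weak flow; both are legitimate, and either writes up to a complete proof.
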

 \begin{proof}
   Because $M_0$ is closed and embedded, we can prove that for any fixed $t$, $T < t < t_0$ for some $T > 0$, there is a constant $V = V(Vol(M_0),T)$ such that $Vol(B_{r}(0) \cap M_t) \leq Vr^n$ for all $r >0$, and all $T \leq t < t_0$, see Lemma 2.9 in \cite{CM}. Furthermore, it is easy to check that
   \begin{equation}\label{dis1}
   Vol(B_{r}(0) \cap M_s^j) \leq Vr^n
   \end{equation}
   for all $r >0$ and all $\lambda_j^{-2}(T-t_0) \leq s < 0$.

   Since $\{M_s^j\}$ is a also a mean curvature flow, then by Huisken's monotonicity formula for any $x_0 \in \textbf{R}^{n+1}$, and any $s_2 < s_1 < s_0$, we have
   \begin{equation}\label{dis2}
   \int_{M_{s_1}^j} \Phi_{(x_0,s_0)} \leq \int_{M_{s_2}^j} \Phi_{(x_0,s_0)}
   \end{equation}
   By (\ref{dis1}), smoothness of the function $\Phi_{(x_0,s_0)}$, and the measure convergence of $\{M^j_s\}$, as $j \rightarrow \infty$ for every $s < s_0$ we have
   \begin{equation} \label{conv}
    \int_{M^j_s} \Phi_{(x_0,s_0)}  \rightarrow \int_{\Gamma_s} \Phi_{(x_0,s_0)}
   \end{equation}
   Combining this with (\ref{dis2}), we have for any $s_2 < s_1 < s_0$,
   \begin{equation}
   \int_{\Gamma_{s_1}} \Phi_{(x_0,s_0)} \leq \int_{\Gamma_{s_2}} \Phi_{(x_0,s_0)}
   \end{equation}
   so
   $$\lim_{s \nearrow s_0} \int_{\Gamma_{s}} \Phi_{(x_0,s_0)} $$ exists.

   Suppose there are a sequence $\{x_j\}$, $x_j \in M^j_{s_0}$ and a point $y \in \textbf{R}^{n+1}$ satisfying $\lim_{j \rightarrow \infty} x_j= y$. It is easy to see that if we prove $y \in \Gamma_{s_0}$, then we get the lemma.

   For any smooth embedded mean curvature flow $\{\widehat{M}_t\}$, so it is easy to check that if $\widehat{x} \in \widehat{M}_{s_0}$
   \begin{equation}\label{dis3}
     \lim_{s \rightarrow s_0} \int_{\widehat{M}_t} \Phi_{(\widehat{x}, s_0)} = 1
   \end{equation}
   Moreover, it is also easy to check that for any $\widehat{x} \notin \widehat{M}_{s_0}$, we have
   \begin{equation}\label{dis4}
     \lim_{s \rightarrow s_0} \int_{\widehat{M}_{s_0}} \Phi_{(\widehat{x}, s_0)} = 0
   \end{equation}
   That is to say, if
   $$\lim_{s \rightarrow s_0} \int_{\widehat{M}_{s}} \Phi_{(\widehat{x}, s_0)} \neq 0$$
   we must have $\widehat{x} \in \widehat{M}_{s_0}$.Actually, we do not need to assume $\{\widehat{M}_t\}$ is smooth and embedded here.

   Furthermore, if $\{\widehat{M}_t\}$ is only smooth in a neighbourhood of $\widehat{x}$, we also have (\ref{dis3}) and (\ref{dis4}), see  P.66 in \cite{E}.

   For the sequence $\{x_j\}$ and $y$, it is also easy to prove the following result like (\ref{conv}) under the the condition of (\ref{dis1}), smoothness of the function $\Phi_{(x_0,s_0)}$, and the measure convergence of $\{M^j_s\}$:
   \begin{equation}\label{conv2}
     \int_{M^j_s} \Phi_{(x_j,s_0)}  \rightarrow \int_{\Gamma_s} \Phi_{(y,s_0)}
   \end{equation}
   as $j \rightarrow \infty$. By Huisken's monotonicity formula and (\ref{dis3}), from (\ref{conv2}) we get that
   \begin{equation}
     \int_{\Gamma_s} \Phi_{(y,s_0)} \geq 1
   \end{equation}
   Then we take $s \rightarrow s_0$, we have
   $$\lim_{s \rightarrow s_0} \int_{\Gamma_s} \Phi_{(\widehat{x}, s_0)} \geq 1$$
   Thus we must have $y \in \Gamma_{s_0}$ and complete the proof.
 \end{proof}
 In the following subsections, see lemma {\ref{mainlemma}}, we will further prove that if a tangent flow is smooth and embedded, we even have smooth convergence.

\section{Partial regularity for mean curvature flows}
We will need a partial regularity theorem due to Ecker, see theorem 5.6 in \cite{E}.

Before stating Ecker's theorem, we need to introduce a test function, which plays an important role in Ecker's local monotonicity, see theorem 4.17 in \cite{E}. Define
$$\phi_{\rho}(x,t) = (1 - \frac{|x|^2 + 2nt}{\rho^2})^3_{+}$$
and its translates
$$\phi_{(x_0 , t_0), \rho}(x,t) = \phi_{\rho}(x - x_0 , t - t_0)$$
For an open subset $U$ of $\textbf{R}^{n+1}$, there is a radius $\rho_0 > 0$ such that
$$B_{\sqrt{1+2n} \rho_0}(x_0) \times (t_0 - \rho_0^2, t_0) \subset U \times (t_1 ,t_0).$$
For all $\rho \in (0, \rho_0)$ and $t \in (t_0 - \rho_0^2, t_0)$ then we have
$$spt \phi_{(x_0, t_0), \rho} \subset B_{\sqrt{\rho^2 - 2n(t -t_0)}}(x_0) \subset B_{\sqrt{1+2n} \rho_0}(x_0) \subset U$$
The Gaussian density at $(x_0 , t_0)$ of mean curvature flow $\{M_t\}$ is defined as
\begin{equation}\label{gaussden}
  \Theta(M_t ,x_0, t_0) = \lim_{t \nearrow t_0} \int_{M_t} \Phi_{(x_0, t_0)}
\end{equation}
It is easy to check that, if $x_0$ is a regular point of $M_{t_0}$ then $ \Theta(M_t ,x_0, t_0) = 1$.

\begin{theo}[Ecker's local monotonicity, \cite{E}]\label{locmono}
  Let $\{M_t\}_{t \in (t_1 ,t_0)}$ be a smooth,properly embedded solution of mean curvature flow in an open set $U \subset \textbf{R}^{n+1}$. Then for every $x_0 \in U$ there is a $\rho_0 \in (0, \sqrt{t_0 - t_1})$ such that for all $\rho  \in (0, \rho_0]$ and $t \in (t_0 - \rho^2 , t_0)$ we have
  $$spt\phi_{(x_0 , t_0), \rho}(\cdot , t) \subset U$$
  and
  $$\frac{d}{dt} \int_{M_t} \Phi_{(x_0 , t_0)} \phi_{(x_0 ,t_0), \rho} \leq - \int_{M_t} |\overrightarrow{H}(x) - \frac{(x - x_0)^{\perp}}{2(t - t_0)}|^2 \Phi_{(x_0 , t_0)} \phi_{(x_0 ,t_0), \rho}$$
  Since the right-hand side is non-positive and
  $$\phi_{(x_0 , t_0) , \rho}(x_0 ,t_0) = 1$$
  for every $\rho \in (0, \rho_0]$, this implies that the locally defined Gaussian density
  $$\Theta(M_t, x_0 , t_0) \equiv \lim_{t\nearrow t_0} \int_{M_t} \Phi_{(x_0, t_0)}\phi_{(x_0, t_0), \rho}$$
  exists, is independent of $\rho$ and for global solutions agrees with the Gaussian density defined in (\ref{gaussden}). Furthermore, for every $t \in (t_0 - \rho^2, t_0),$
  $$\Theta(M_t, x_0 , t_0) \leq \int_{M_t}\Phi_{(x_0, t_0)}\phi_{(x_0, t_0), \rho} $$
\end{theo}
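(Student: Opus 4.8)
The plan is to run Huisken's monotonicity computation against the localized weight $\Phi_{(x_0,t_0)}\,\phi_{(x_0,t_0),\rho}$ in place of $\Phi_{(x_0,t_0)}$, and to show that the one-parameter family of cutoffs $\phi_\rho$ is tailored so that every extra term it produces is either harmless or manifestly signed. By a parabolic translation I may take $(x_0,t_0)=(0,0)$; write $\Phi=\Phi_{(0,0)}$, $\phi=\phi_{(0,0),\rho}$, $v=|x|^2+2nt$ and $\psi=1-v/\rho^2$, so that $\phi=\psi_+^{\,3}$. The existence of a $\rho_0$ with $\mathrm{spt}\,\phi(\cdot,t)\subset B_{\sqrt{\rho^2-2nt}}(x_0)\subset U$ for all $\rho\le\rho_0$ and $t\in(-\rho^2,0)$ is immediate from the definition of $\phi_\rho$ and the containment $B_{\sqrt{1+2n}\rho_0}(x_0)\times(t_0-\rho_0^2,t_0)\subset U\times(t_1,t_0)$ already recorded, and it is exactly this compactness of support inside $U$ that will let me integrate by parts on the smooth, properly embedded slices $M_t$ with no boundary contribution. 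I would start from the first variation identity along the flow, $\frac{d}{dt}\int_{M_t}f=\int_{M_t}\big(\partial_t f+\langle Df,\overrightarrow{H}\rangle-|\overrightarrow{H}|^2 f\big)$ for $f\in C^2$ with $\mathrm{spt}\,f(\cdot,t)$ compact in $U$, together with Huisken's classical pointwise identity on $M_t$, $\partial_t\Phi+\langle D\Phi,\overrightarrow{H}\rangle-|\overrightarrow{H}|^2\Phi+\Delta_{M_t}\Phi=-\big|\overrightarrow{H}-\frac{x^{\perp}}{2t}\big|^2\Phi$.

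The single new geometric ingredient I would isolate first is that $v=|x|^2+2nt$ is caloric along mean curvature flow: combining $\Delta_{M_t}|x|^2=2\langle x,\overrightarrow{H}\rangle+2n$ with $(\partial_t+\langle D\cdot,\overrightarrow{H}\rangle)|x|^2=2\langle x,\overrightarrow{H}\rangle$ gives $\big(\partial_t+\langle D\cdot,\overrightarrow{H}\rangle-\Delta_{M_t}\big)v=0$; hence $\psi$ is caloric and $\phi=\psi_+^{\,3}$ (which is of class $C^2$, the exponent $3$ being chosen for exactly that) is a supersolution, $\big(\partial_t+\langle D\cdot,\overrightarrow{H}\rangle-\Delta_{M_t}\big)\phi=-6\,\psi_+\,|\nabla^{M_t}\psi|^2\le 0$. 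Then I take $f=\Phi\phi$ in the first variation identity, expand by the Leibniz rule, add and subtract $\Delta_{M_t}\Phi$ to the factor multiplying $\phi$ and $\Delta_{M_t}\phi$ to the factor multiplying $\Phi$, and absorb the discrepancy using the self-adjointness $\int_{M_t}\big(\Phi\Delta_{M_t}\phi-\phi\Delta_{M_t}\Phi\big)=0$ (valid since $\phi$ is compactly supported in $U$). The derivative then becomes $\int_{M_t}\Big[\phi\big(\partial_t\Phi+\langle D\Phi,\overrightarrow{H}\rangle-|\overrightarrow{H}|^2\Phi+\Delta_{M_t}\Phi\big)+\Phi\big(\partial_t\phi+\langle D\phi,\overrightarrow{H}\rangle-\Delta_{M_t}\phi\big)\Big]$; by Huisken's identity the first summand equals $-\big|\overrightarrow{H}-\frac{x^{\perp}}{2t}\big|^2\Phi\phi$ and by the supersolution property the second equals $-6\,\psi_+\,|\nabla^{M_t}\psi|^2\,\Phi\le 0$, so $\frac{d}{dt}\int_{M_t}\Phi\phi\le-\int_{M_t}\big|\overrightarrow{H}-\frac{x^{\perp}}{2t}\big|^2\Phi\phi$ (the would-be cross terms $\langle\nabla^{M_t}\Phi,\nabla^{M_t}\phi\rangle$ having been absorbed by the self-adjointness step). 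This is the asserted inequality.

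The remaining assertions follow formally. Since the right-hand side is $\le 0$ and $\int_{M_t}\Phi\phi\ge 0$, the function $t\mapsto\int_{M_t}\Phi\phi$ is non-increasing and bounded below on $(t_0-\rho^2,t_0)$, so $\Theta(M_t,x_0,t_0)=\lim_{t\nearrow t_0}\int_{M_t}\Phi\phi$ exists, and monotonicity gives $\Theta\le\int_{M_t}\Phi\phi$ for every $t\in(t_0-\rho^2,t_0)$; that this limit deserves the name Gaussian density is consistent with $\phi_{(x_0,t_0),\rho}(x_0,t_0)=1$. For independence of $\rho$ and for agreement with the global density (\ref{gaussden}) I would use the elementary bounds $|1-\phi_\rho(\cdot,t)|\le C\rho^{-2}\big(|x-x_0|^2+(t_0-t)\big)$ for $|x-x_0|\le\rho/2$ (with $t$ close to $t_0$) and $|1-\phi_\rho|\le 1$ for $|x-x_0|>\rho/2$, together with the local area bound $\mathrm{Vol}(B_r(x_0)\cap M_t)\le Vr^n$ (Lemma 2.9 in \cite{CM}), which forces $\int_{M_t}|x-x_0|^2\,\Phi_{(x_0,t_0)}\to 0$ and $\int_{M_t\cap\{|x-x_0|>\rho/2\}}\Phi_{(x_0,t_0)}\to 0$ as $t\nearrow t_0$; hence $\int_{M_t}\Phi_{(x_0,t_0)}(\phi_\rho-1)\to 0$, which yields both claims at once. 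The step I expect to require the most care is the sign bookkeeping in the middle paragraph — making sure that no uncontrolled remainder survives, which is precisely where the special form $\phi_\rho=(1-(|x|^2+2nt)/\rho^2)_+^{\,3}$ enters essentially (the combination $|x|^2+2nt$ to be caloric on \emph{every} mean curvature flow, and the cube to keep $\phi_\rho\in C^2$ with nonpositive heat operator) — together with keeping the two limiting arguments honest via the local area bounds; everything else is a faithful transcription of Huisken's original calculation.
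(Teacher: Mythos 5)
Your proof is correct and follows essentially the argument the paper relies on: the paper does not prove Theorem \ref{locmono} at all but quotes it from Ecker's book \cite{E}, and your derivation — Huisken's pointwise identity combined with the observation that $|x|^2+2nt$ is caloric along mean curvature flow, so that $\phi_\rho=(1-(|x|^2+2nt)/\rho^2)_+^3$ is a $C^2$ supersolution of the relevant heat operator, the cross terms being removed by integration by parts which is legitimate because $\phi_\rho$ is compactly supported in $U$ — is precisely Ecker's own proof of his Theorem 4.17. One minor caveat: for the $\rho$-independence of the locally defined density the uniform area bound should be taken from Ecker's local area estimates rather than Lemma 2.9 of \cite{CM}, which is stated for flows of closed hypersurfaces; for the claim that the limit agrees with (\ref{gaussden}) for global solutions, that lemma does suffice.
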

The following partial regularity theorem is due to by B.White \cite{W}, and in \cite{E} Ecker proves a similar result using the local monotonicity formula, here we present Ecker's version of B.White's partial regularity theorem.
\begin{theo}[Ecker, \cite{E}]\label{Ecker}
  Suppose $\{M_t\}$ is a smooth, properly embedded solution of mean curvature flow in $U \times (t_1 ,t_0)$ which reaches $x_0$ at time $t_0$, and $U$ is an open set in $\textbf{R}^{n+1}$. Then there exist constants $\epsilon_0 >0$ and $c_0 >0$ such that whenever
  $$\Theta(M_t , x_0 ,  t_0) \leq 1+ \epsilon_0$$
  holds at $x_0 \in U$, then
  $$|A(x)|^2 \leq \frac{c_0}{\rho^2}$$
  for some $\rho > 0$ and for all $x \in M_t \cap B_{\rho}(x_0)$ and $t \in (t_0 - \rho^2, t_0)$. In particular, $x_0$ is a regular point at time $t_0$.
\end{theo}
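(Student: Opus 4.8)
The plan is to argue by contradiction through a blow-up (point-picking) argument, using Huisken's monotonicity formula together with Ecker's local monotonicity formula (Theorem \ref{locmono}) to pin down the blow-up limit. Suppose no such constants $\epsilon_0, c_0$ exist. Then there are smooth properly embedded mean curvature flows $\{M^i_t\}$ in open sets $U_i \subset \textbf{R}^{n+1}$, each reaching a point $x_i$ at a time $\tau_i$, with $\Theta(M^i_t, x_i, \tau_i) \le 1 + \frac1i$, but for which the curvature estimate fails with $c_0 = i$ at every radius. After translating in space-time we may assume $x_i = 0$ and $\tau_i = 0$, and, since each flow reaches $0$ at time $0$, we may restrict to a fixed parabolic neighborhood of $(0,0)$ on which all the $\{M^i_t\}$ are defined. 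The failure of the curvature estimate for every $\rho$ and every constant, together with a standard point-selection lemma (Hamilton's point picking, adapted to mean curvature flow and to the available domain), produces points $(y_i, s_i)$ in the space-time support of $\{M^i_t\}$ and scales $r_i = |A|^{-1}(y_i,s_i) \to 0$ so that $|A|$ is comparably bounded by $r_i^{-1}$ on a parabolic ball of radius $R_i r_i$ about $(y_i,s_i)$ with $R_i \to \infty$, and with $y_i \to 0$, $s_i \to 0$.

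Next I would parabolically rescale: set $\tilde M^i_s = r_i^{-1}(M^i_{r_i^2 s + s_i} - y_i)$. These are mean curvature flows with $|\tilde A| \le 1 + \epsilon$ on larger and larger parabolic balls and $|\tilde A|(0,0) = 1$; in particular, any smooth limit is not a hyperplane. By the interior estimates of Ecker--Huisken, the curvature bound upgrades to uniform bounds on all derivatives of $\tilde A$ on compact subsets, so a subsequence converges smoothly on compact subsets of space-time to a nonempty, smooth, properly embedded ancient mean curvature flow $\{\tilde M_s\}_{s<0}$ with $|\tilde A|(0,0) = 1$.

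The crucial point is to show that $\{\tilde M_s\}$ has Gaussian density identically equal to $1$, which by the rigidity case of Huisken's monotonicity formula forces it to be a static multiplicity-one hyperplane, contradicting $|\tilde A|(0,0) = 1$. On the one hand, at any point reached by a locally smooth flow the density is at least $1$, exactly as in (\ref{dis3})--(\ref{dis4}); applied to the limit this gives $\Theta(\tilde M_s, z, \sigma) \ge 1$ for every $(z,\sigma)$ reached by $\{\tilde M_s\}$. On the other hand, the hypothesis $\Theta(M^i_t, 0, 0) \le 1 + \frac1i$, the fact that the localized Gaussian density of Theorem \ref{locmono} is monotone backward in time and independent of the localizing radius (so that densities at nearby base points and earlier times dominate, up to the vanishing cutoff $\phi_{(x_0,t_0),\rho}$, the one at $(0,0)$), and the scale-invariance of the Gaussian density under parabolic rescaling about a point of the flow, combine with the upper semicontinuity of $\Theta$ under the smooth convergence $\tilde M^i \to \tilde M$ to yield $\Theta(\tilde M_s, z, \sigma) \le 1$ for all relevant $(z,\sigma)$. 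Hence $\Theta \equiv 1$ on $\{\tilde M_s\}$, the rigidity statement applies, and the contradiction is complete. Once the contradiction is reached, the existence of $\epsilon_0$ and $c_0$ follows; the final assertion that $x_0$ is a regular point at time $t_0$ is then immediate, since the curvature bound $|A|^2 \le c_0/\rho^2$ on $M_t \cap B_\rho(x_0)$ for $t \in (t_0 - \rho^2, t_0)$, combined with the interior estimates, forces smooth convergence of $\{M_t\}$ near $x_0$ as $t \nearrow t_0$ to a smooth hypersurface through $x_0$.

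The main obstacle I expect is the bookkeeping around the localized Gaussian density: one must verify carefully that after translating and parabolically rescaling, the density values of $\{\tilde M^i_s\}$ at the base points and scales produced by the point selection are still controlled by $1 + \frac1i$. This is where Ecker's local monotonicity (Theorem \ref{locmono}) is essential, since it is the only tool that gives a genuinely local, rescaling-compatible monotone quantity; one must check that the cutoff error tends to zero after rescaling (because the localizing radius becomes infinite in the blow-up) and that the backward monotonicity transfers the bound from $(0,0)$ to the nearby base points $(y_i,s_i)$. A secondary technical point is making the point-selection argument respect the possibly shrinking domains $U_i \times (t_1^i, t_0^i)$, which is handled by the preliminary reduction to a fixed parabolic neighborhood of $(0,0)$.
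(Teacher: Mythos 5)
Your proposal is correct and is essentially the same argument the paper relies on: the paper does not reprove Theorem \ref{Ecker} but cites Ecker, and the contradiction/point-picking/parabolic-rescaling/Huisken-rigidity scheme you outline (first spreading the one-point Gaussian density bound to nearby space-time points via the local monotonicity formula of Theorem \ref{locmono}, then blowing up, squeezing the Gaussian integral to $1$, and concluding a static hyperplane in contradiction with $|A|(0,0)=1$) is exactly the scheme the paper itself reproduces in Lemma \ref{lemm} and Lemma \ref{thm1lemm}. So the approach matches the paper's, and no substantive gap needs flagging.
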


In Ecker's proof, he actually proved the following result:
\begin{theo}
    Whenever $\{M_t\}$ is a smooth, properly embedded solution of mean curvature flow in $U \times (t_1, t_0)$ which reaches $x_0$ at time $t_0$, and $B_{\rho}(x_0) \times (t_0 - 2\rho^2 , t_0) \subset U \times (t_1 , t_0)$, if there exists constants $\epsilon_0 >0$ and $c_0 > 0$ such that if
    $$\int_{M_t} \Phi_{(y,\tau)} \phi_{(y,\tau),\rho_0} \leq \epsilon_0$$
    for all $(y,\tau) \in B_{\rho}(x_0) \times (t_0 - \rho^2 , t_0)$ and $t \in (\tau - \rho^2, \tau)$, and $\rho_0$ is chosen to make sure that $spt \phi_{(y,\tau),\rho_0} \subset U$, then we have
    $$|A(x)|^2 \leq \frac{c_0}{\rho^2}$$
    for all $x \in M_t \cap B_{\rho}(x_0)$ and $t \in (t_0 - \rho^2, t_0)$.
  \end{theo}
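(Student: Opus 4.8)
The plan is to argue by contradiction through a parabolic blow-up, in the spirit of White's local regularity theorem \cite{W}. By the parabolic scaling invariance of mean curvature flow and of all the quantities involved -- $\Phi$, $\phi_\rho$, the local volume ratio, and $|A|^2\rho^2$ -- one may first normalize $\rho = 1$, so that $B_1(x_0)\times(t_0-2,t_0)\subset U\times(t_1,t_0)$ and the cutoff scale $\rho_0$ can be taken to be a fixed constant depending only on $\mathrm{dist}(B_1(x_0),\partial U)$. We read the hypothesis as a bound by $1+\epsilon_0$ with $\epsilon_0>0$ a (small) dimensional threshold; this is the substantive reading, since by (\ref{dis3}) together with Huisken's monotonicity any point of $B_\rho(x_0)$ lying on the flow already forces the left-hand side to be at least $1$. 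If the theorem were false, no choice of $\epsilon_0>0$ and $c_0>0$ would make the implication hold, so in particular for each $k$ there would be a smooth, properly embedded flow $\{M^k_t\}$ on $U\times(t_1,t_0)$ reaching $x_0$ at $t_0$, with
$$\int_{M^k_t}\Phi_{(y,\tau)}\,\phi_{(y,\tau),\rho_0}\;\leq\;1+\tfrac{1}{k}$$
for all $(y,\tau)\in B_1(x_0)\times(t_0-1,t_0)$ and $t\in(\tau-1,\tau)$, yet with $|A^k|^2>k$ at some point of $M^k_t\cap B_1(x_0)$ for some $t\in(t_0-1,t_0)$.

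First I would run a point-selection argument of Schoen's type, adapted to the parabolic setting: since each $\{M^k_t\}$ is smooth on the relevant closed parabolic region, one can choose a space-time point $(x_k,t_k)$ with $x_k\in M^k_{t_k}\cap B_1(x_0)$ and $t_k\in(t_0-1,t_0)$ such that, writing $Q_k:=|A^k|(x_k,t_k)$, one has $Q_k\to\infty$, a backward parabolic neighbourhood of $(x_k,t_k)$ of parabolic radius $r_k$ on which $|A^k|\leq 2Q_k$, and $Q_k r_k\to\infty$. Then I would rescale parabolically,
$$\widetilde M^k_s\;:=\;Q_k\bigl(M^k_{\,t_k+Q_k^{-2}s}-x_k\bigr),$$
obtaining mean curvature flows with $|\widetilde A^k|\leq 2$ on parabolic balls about the space-time origin whose radii tend to infinity, and with $|\widetilde A^k(0,0)|=1$. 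These inherit the uniform Euclidean volume bound $Vol(B_r(0)\cap\widetilde M^k_s)\leq Vr^n$, as in (\ref{dis1}) (via Lemma 2.9 of \cite{CM}). By the interior estimates for mean curvature flow (see \cite{E}), the uniform bound on $|\widetilde A^k|$ upgrades to uniform bounds on $|\nabla^m\widetilde A^k|$ for all $m$ on fixed compact subsets of space-time, so a subsequence converges in $C^\infty_{loc}$ to a smooth, properly embedded, ancient mean curvature flow $\{N_s\}_{s\leq 0}$ with $\sup|A_N|\leq 2$ and $|A_N(0,0)|=1$; in particular $\{N_s\}$ is not a static hyperplane.

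It remains to contradict this using the Gaussian smallness. Under the rescaling the cutoff $\phi_{(y,\tau),\rho_0}$ becomes $\phi_{(\cdot),Q_k\rho_0}$, and since $Q_k\rho_0\to\infty$ these tend to $1$ locally uniformly in space-time; combining the $C^\infty_{loc}$ convergence of $\{\widetilde M^k_s\}$ with the volume bound to dominate the Gaussian tails (exactly as in the proof of Lemma \ref{tan.conv}, cf. (\ref{conv})) I would pass the displayed inequality to the limit and obtain $\int_{N_s}\Phi_{(y,\tau)}\leq 1$ for every space-time point $(y,\tau)$ and every $s<\tau$. On the other hand, since $\{N_s\}$ is smooth, Huisken's monotonicity makes $s\mapsto\int_{N_s}\Phi_{(y,\tau)}$ non-increasing with limit $1$ as $s\nearrow\tau$ whenever $y\in N_\tau$ (by (\ref{dis3})); hence this quantity is constant, equal to $1$, for every centre $(y,\tau)$ with $y\in N_\tau$, and the error term in Huisken's monotonicity formula (cf. Theorem \ref{locmono}) vanishes identically:
$$\int_{N_s}\Big|\,\overrightarrow{H}-\frac{(x-y)^{\perp}}{2(s-\tau)}\,\Big|^2\Phi_{(y,\tau)}=0\qquad\text{for all }s<\tau .$$
Thus $\overrightarrow{H}=\dfrac{(x-y)^{\perp}}{2(s-\tau)}$ on $N_s$ for a whole hypersurface's worth of centres $y\in N_\tau$. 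Fixing $x\in N_s$ and letting $y$ vary over $N_\tau$ (for any $\tau\in(s,0]$), the pairing with $\nu(x)$ shows $\langle y,\nu(x)\rangle$ is independent of $y$, so $N_\tau$ lies in an affine hyperplane orthogonal to $\nu(x)$ and, being a properly embedded hypersurface of full dimension, coincides with it -- with multiplicity one, since the density is $1$. Hence each $N_\tau$, $\tau\leq 0$, is a static multiplicity-one hyperplane, so $|A_N|\equiv 0$, contradicting $|A_N(0,0)|=1$.

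I expect the blow-up step to be the crux: ensuring genuinely smooth convergence of the rescaled flows (this is where the interior estimates and the uniform local volume bound are indispensable), checking that the limit is non-flat and properly embedded rather than, say, a multiplicity-two hyperplane -- ruled out a posteriori by the density being $1$ -- and verifying that the Gaussian smallness survives the limit as the cutoff $\phi_{(\cdot),Q_k\rho_0}$ opens up to $1$. The concluding rigidity is comparatively soft, being the equality case of Huisken's monotonicity. (Ecker's original argument for this statement takes a different, blow-up-free route, deriving the curvature bound from the local monotonicity formula of Theorem \ref{locmono} by a parabolic De Giorgi--Moser iteration applied to $|A|^2$; I find the blow-up proof cleaner to set up.)
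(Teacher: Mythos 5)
Your blow-up/contradiction argument is essentially the proof the paper relies on: the paper gives no separate proof of this statement (it is extracted from Ecker's proof of Theorem 5.6 in \cite{E}), and the same scheme --- point selection, parabolic rescaling to a smooth ancient limit whose Gaussian ratios are identically $1$, then rigidity in Huisken's monotonicity forcing a static hyperplane against the normalization $|A|(0,0)=1$ --- is exactly what the paper carries out in its proof of Lemma \ref{thm1lemm}. Your reading of the hypothesis as a bound by $1+\epsilon_0$ is the intended one, and the remaining glosses are routine: the flows need not be smooth up to the closed cylinder (one exhausts by earlier final times so the point-selection supremum is finite), and the local area bounds for the rescaled flows should be drawn from the assumed Gaussian ratio bounds rather than from Lemma 2.9 of \cite{CM}, which presupposes a closed initial hypersurface.
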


  \begin{rema}
    In the original version of Ecker's theorem, he didn't point out what exactly the constant $c_0$ depends on. However, throughout his proof, the author think $c_0$ depends on $\epsilon_0$, $U$ and $t_0 - t_1$. Whatever, we can still prove Theorem \ref{main1} following his proof.
  \end{rema}

   \subsection{Proof of theorem \ref{main1}} Following the same technique given by Ecker in proving Theorem \ref{Ecker}, now we prove Theorem \ref{main1}.
   \begin{lemm}\label{thm1lemm}
    Let $M_t$ be an smooth complete embedded ancient solution of mean curvature flow which exists in  $(-\infty, 0]$. Assuming the origin $0 \in M_0$, then there exists a constant $\epsilon >0$, such that for any such ancient solution $M_t$ and any $ R >0$, if for all $(y, \tau) \in B_R(0) \times (-\infty, 0]$, $M_t$ satisfies
    $$\int_{M_t} \Phi_{(y,\tau)} < 1+\epsilon,$$
    then we have
    $$ (\sigma R)^2 \sup_{(-(1-\sigma)^2 R^2 , 0)} \sup_{M_t \cap B_{(1-\sigma)R}(0)} |A|^2 \leq C_0,$$
    for all $\sigma \in (0,1)$, and $C_0$ does not depend on $R$ and $M_t$.
  \end{lemm}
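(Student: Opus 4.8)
The statement is a parabolically scale-invariant $\epsilon$-regularity estimate, so the plan is to deduce it one space-time point at a time from the local regularity theorem of B.~White \cite{W}, equivalently the scale-invariant form of Theorem \ref{Ecker} recorded just before the Remark: there are constants $\epsilon>0$ and $c_0>0$, depending only on $n$, such that if $\{M_t\}$ is a smooth properly embedded mean curvature flow in the parabolic ball $B_\rho(x_0)\times(t_0-\rho^2,t_0]$, all of whose Gaussian density ratios there are $<1+\epsilon$, then $|A|^2\le c_0/\rho^2$ on the concentric parabolic ball of half the radius. Here the Gaussian density ratio at $(y,\tau)$ and scale $r>0$ is $\int_{M_{\tau-r^2}}\Phi_{(y,\tau)}$, so the hypothesis $\int_{M_t}\Phi_{(y,\tau)}<1+\epsilon$ of the lemma, for $(y,\tau)\in B_R(0)\times(-\infty,0]$ and $t<\tau$, is precisely a bound on all such ratios over that region. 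The hypotheses that $M_t$ is ancient and that $0\in M_0$ play no structural role; they only keep the cylinders we use inside the domain of the flow and make the final bound uniform. (If one prefers Ecker's cut-off version, one instead needs $\int_{M_t}\Phi_{(y,\tau)}\,\phi_{(y,\tau),\rho_0}\le\epsilon_0$; since $\phi_{(y,\tau),\rho_0}(x,t)\le(1+2n\rho^2/\rho_0^2)^3$ for $t\in(\tau-\rho^2,\tau)$ while $spt\,\phi_{(y,\tau),\rho_0}\subset U=\textbf{R}^{n+1}$ automatically, choosing $\rho_0$ a large enough fixed multiple of $\rho$ reduces this too to the same density bound.)

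Now I would fix $\sigma\in(0,1)$ and an arbitrary space-time point $(x_0,t_0)$ with $x_0\in M_{t_0}\cap B_{(1-\sigma)R}(0)$ and $t_0\in(-(1-\sigma)^2R^2,0)$, and apply the theorem above on the parabolic ball $B_{\sigma R}(x_0)\times(t_0-(\sigma R)^2,t_0]$. The flow is smooth and properly embedded there, being complete, embedded and smooth on all of $(-\infty,0]$; every centre $y\in B_{\sigma R}(x_0)$ lies in $B_R(0)$ because $x_0\in B_{(1-\sigma)R}(0)$; and every time instant occurring lies in $(-\infty,0)$ because $t_0<0$. Hence every Gaussian density ratio entering the hypothesis has the form $\int_{M_t}\Phi_{(y,\tau)}$ with $(y,\tau)\in B_R(0)\times(-\infty,0]$ and $t<\tau$, so it is $<1+\epsilon$ by the lemma's hypothesis. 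The theorem then gives $|A|^2\le c_0/(\sigma R)^2$ throughout $B_{\sigma R/2}(x_0)\times(t_0-(\sigma R)^2/4,t_0]$, and this ball contains $(x_0,t_0)$, so $(\sigma R)^2\,|A(x_0,t_0)|^2\le c_0$.

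Since $(x_0,t_0)$ was an arbitrary such point, taking the supremum gives $(\sigma R)^2\sup_{t\in(-(1-\sigma)^2R^2,0)}\sup_{M_t\cap B_{(1-\sigma)R}(0)}|A|^2\le c_0=:C_0$, with $C_0=C_0(n)$ independent of $R$, of $\sigma$, and of $M_t$. I expect the one genuine obstacle to be precisely this uniformity: one must be sure that neither $\epsilon$ nor $c_0$ secretly depends on $R$ or on the particular ancient flow. For White's theorem this is automatic, its constants being purely dimensional; for Ecker's version the Remark flags that $c_0$ appears a priori to depend on ``$U$'' and on ``$t_0-t_1$'', and the remedy is simply to rescale the chosen cylinder parabolically to unit size before applying the theorem, after which the ambient space is still $\textbf{R}^{n+1}$, the parabolic ball has radius $1$, and one is in a fixed configuration, so that re-running Ecker's local regularity argument yields dimensional constants. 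Everything else is routine parabolic scaling.
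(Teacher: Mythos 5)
Your proposal is essentially correct, but it takes a genuinely different route from the paper. You deduce the estimate pointwise: for each $(x_0,t_0)$ in the target region you apply the scale-invariant local regularity theorem of White on the parabolic ball $B_{\sigma R}(x_0)\times(t_0-(\sigma R)^2,t_0]$, noting that all Gaussian density ratios with centres in that ball are covered by the hypothesis (since $B_{\sigma R}(x_0)\subset B_R(0)$ and $\tau\le t_0<0$), and you lean on the fact that White's constants $\epsilon$, $c_0$ depend only on $n$ so that parabolic rescaling yields the uniform bound $(\sigma R)^2|A|^2\le C_0$. The paper instead proves the lemma by contradiction with a point-picking/blow-up compactness argument modelled on Ecker's proof of his Theorem 5.6: if the scale-invariant curvature quantity $\gamma_j^2$ blows up along a sequence of flows with densities $\le 1+\tfrac1j$, one selects near-extremal points, rescales so that $|A|=1$ at the marked point, obtains uniform curvature bounds on expanding regions, extracts a smooth limit flow whose Gaussian integrals are identically $1$, and then the rigidity case of Huisken's monotonicity forces the limit to be a static hyperplane, contradicting $|A(0)|=1$. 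The trade-off: your argument is shorter and cleaner, but it outsources precisely the delicate point--uniformity of the constants in $R$ and in the flow--to the dimensional-constant form of White's theorem (a legitimate citation, and you correctly use the version whose hypothesis bounds density ratios at all nearby centres and scales, not just the single-point density of the paper's Theorem \ref{Ecker}); the paper's compactness proof is exactly how one manufactures that uniformity from the Ecker-style statements it actually records, whose constant dependence is left unspecified (this is the point of the Remark). Two small caveats on your fallback via the cut-off version: the paper's unnumbered theorem has an evident typo ($\le\epsilon_0$ should read $\le 1+\epsilon_0$), so your reduction should be phrased against the corrected statement; and both your argument and the paper's silently use that ``complete embedded'' plus the density bounds gives the properness/local area control needed to invoke the local regularity machinery, a technical point neither proof addresses explicitly.
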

  \begin{proof}
    Suppose the lemma is not correct. Then for every $j \in \emph{N}$ one can find a smooth, complete embedded solution $\{M^j_t\}$ which reaches $0 \in \textbf{R}^{n+1}$ at time 0 and some $R_j >0$ such that for all $(y, \tau) \in B_{R_j}(0) \times (\infty , 0]$,
    $$\int_{M^j_t} \Phi_{(y,\tau)} \leq 1+ \frac{1}{j}$$
    holds but
    $$\gamma^2_j \equiv \sup_{\sigma \in (0,1)}((\sigma R_j)^2 \sup_{(-(1-\sigma)^2 R^2_j,0)} \sup_{M^j_t \cap B_{(1-\sigma)R_j}} |A|^2) \rightarrow \infty$$
    as $j \rightarrow \infty$. In particular, one can find a $\sigma_j \in (0,1)$ for which
    $$\gamma^2_j = (\sigma_j R_j)^2 \sup_{(-(1-\sigma_j)^2 R^2_j,0)} \sup_{M^j_t \cap B_{(1-\sigma_j)R_j}} |A|^2$$
    and a point
    $$y_j \in M^j_{\tau_j} \cap \overline{B}_{(1-\sigma_j)R_j}$$
    at a time $[-(1-\sigma_j)^2R_j^2, 0]$ so that
    $$\gamma_j^2 = \sigma_j^2 R_j^2 |A(y_j)|^2.$$
    If we choose $\sigma = \frac{1}{2} \sigma_j$, we have
    $$\sigma_j^2 R^2_j \sup_{(-(1-\frac{\sigma_j}{2})^2R_j^2, 0)} \sup_{M^j_t \cap B_{(1-\sigma_j/2)R_j}(0)} |A|^2 \leq 4\gamma_j^2$$
    that is
    $$\sup_{(-(1-\frac{\sigma_j}{2})^2R_j^2, 0)} \sup_{M^j_t \cap B_{(1-\sigma_j/2)R_j}(0)} |A|^2 \leq 4|A(y_j)|^2.$$
    Since $(\tau_j - \frac{\sigma_j^2}{4}R^2_j , \tau_j) \subset (-(1-\frac{\sigma_j}{2})^2R_j^2, 0)$ and $B_{\sigma_jR_j/2}(y_j) \subset B_{(1-\sigma_j/2)R_j}(0)$
    so we can get
    $$\sup_{(\tau_j - \sigma_j^2R^2_j/4 , \tau_j)} \sup_{M^j_t \cap B_{\sigma_jR_j/2(y_j)}} |A|^2 \leq 4|A(y_j)|^2. $$
    Now let
    $$\lambda_j = |A(y_j)|^{-1}$$
    and define
    $$\widetilde{M}^j_s = \frac{1}{\lambda_j} (M^j_{\lambda^2_j s + \tau_j} - y_j)$$
    for $s \in [\lambda^{-2}_j \sigma_j^2 R^2_j/4, 0]$.

    Then $\{\widetilde{M}^j_s\}$ is a smooth solution of mean curvature flow satisfying
    $$0 \in \widetilde{M}^j_0, |A(0)| = 1$$
    and
    $$\sup_{(\lambda^{-2}_j \sigma^2_j R^2_j/4, 0)} \sup_{\widetilde{M}^j_s \cap B_{\lambda^{-1}_j \sigma_j R_j/2}(0)} \leq 4$$
    for every $j \in \emph{N}$. Since
    $$\lambda_j^{-2} \sigma^2_j R^2_j = \gamma^2_j \rightarrow \infty$$
    we have for every $R>0$ and sufficiently large $j$ depending on $R$,
    $$\sup_{(-R^2 , 0)}\sup_{\widetilde{M}^j_s \cap B_{R}(0)} |A|^2 \leq 4.$$
    By curvature estimates for mean curvature flow we know that for every $j$, $\{\widetilde{M}^j_s\}$ is smooth and have uniform curvature estimate on any compact subset of time-space $\emph{R}^{n+1} \times \emph{R}$. These allow us to apply Arzela-Ascoli theorem to conclude that a subsequence of $\{\widetilde{M}^j_s\}$ converges smoothly on compact subsets of $\emph{R}^{n+1} \times \emph{R}$ to a smooth solution $\{M'_s\}_{s \leq 0}$ of mean curvature flow. Moreover, for $\{M'_s\}_{s \leq 0}$ we can get
    $$0 \in M'_0, |A(0)| = 1$$
    and
    $$|A(y)| \leq 4$$
    for $y \in M'_s, s \leq 0$.
    By our assumption, we know that
    $$\int_{\widetilde{M}^j_s} \Phi \leq 1+ \frac{1}{j}$$
    for all $s \in (-\lambda^{-2}_j \sigma^2_j R^2_j/4, 0)$.
    By the decreasing of $\int_{M^j_t} \Phi_{(y_j,\tau_j)}$ and the smoothness of $\widetilde{M}^j_0$, we also get
    $$\int_{\widetilde{M}^j_s} \Phi \geq 1.$$
    Now we take the limit for $j \rightarrow \infty$, we have
    $$\int_{M'_s} \Phi = 1$$
    for all $s<0$.

    At last, following the same argument in the proof of Theorem 5.6 in \cite{E}, and we complete the proof.
   \end{proof}

   \textit{Proof of Theorem \ref{main1}}: Under the condition of Theorem \ref{main1}, it is easy to check that $\Gamma_{t} = \sqrt{-t+1}\Gamma$ is a self-shrinking ancient solution of mean curvature flow. By Huisken's monotonicity formula and the definition of entropy, we see that $\Gamma_t$ satisfies all the condition needed in Lemma \ref{thm1lemm}, so we get the estimate for $\Gamma$
   $$ (\sigma R)^2 \sup _{(-(1-\sigma)^2 R^2 , 0)} \sup_{M_t \cap B_{(1-\sigma)R}(0)} \leq C_0$$
   for all $\sigma \in (0,1)$. If we take $\sigma = \frac{1}{2}$ and let $R \rightarrow \infty$, then we get $|A|^2 = 0$ everywhere on $\Gamma$, so $\Gamma$ is a hyperplane.

  \section{Partial regularity for tangent flows}Suppose $\{\Gamma_s\}$ is a tangent flow of $\{M_t\}$ at the first singular time, and $\{M^j_s\}$ is the corresponding sequence of parabolic rescalings of $\{M_t\}$. We will need the following consequence of Theorem \ref{Ecker}:

  \begin{lemm}\label{mainlemma}
    Let $\{M_t\} \subset \emph{R}^{n+1}$ be  closed hypersurfaces flowing by mean curvature flow, and $\{\Gamma_s\}$ and $\{M^j_s\}$ are defined as above. If $\Gamma_{-1}$ is multiplicity one, then for any compact subset $K \subset Reg(\Gamma_{-1})$ there is a subsequence of $\{M^j_{-1}\}$ which converge smoothly to $\{\Gamma_{-1}\}$ on $K$.
  \end{lemm}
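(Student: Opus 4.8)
The plan is to show that every point $x_0 \in K \subset \mathrm{Reg}(\Gamma_{-1})$ admits a small parabolic neighborhood in which the flows $\{M^j_s\}$ eventually satisfy the smallness hypothesis of Ecker's theorem (Theorem \ref{Ecker}, or more precisely its local version), thereby obtaining uniform curvature bounds on a fixed-size ball, and then to upgrade these to smooth convergence via interior estimates for mean curvature flow and Arzela--Ascoli. First I would fix $x_0 \in K$. Since $\Gamma_{-1}$ is multiplicity one and $x_0$ is a regular point, the Gaussian density of the limiting self-shrinking flow $\{\Gamma_s\}$ at $(x_0,-1)$ equals $1$; this is the remark after \eqref{gaussden} combined with the fact (quoted from \cite{I1}) that $\Gamma_s = \sqrt{-s}\,\Gamma_{-1}$ is a weak self-shrinker. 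By Ecker's local monotonicity (Theorem \ref{locmono}) applied to the limit, we can choose $\rho>0$ small so that $\int_{\Gamma_s}\Phi_{(y,\tau)}\phi_{(y,\tau),\rho_0}$ is strictly below $1+\epsilon_0/2$ for all $(y,\tau)$ in a small parabolic ball around $(x_0,-1)$, where $\epsilon_0$ is the constant from Theorem \ref{Ecker}.

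Next I would transfer this smallness to the approximating flows $\{M^j_s\}$. By Lemma \ref{tan.conv} the $M^j_s$ converge to $\Gamma_s$ in Hausdorff distance, and by the measure convergence together with the uniform volume bound \eqref{dis1}, the localized Gaussian integrals $\int_{M^j_s}\Phi_{(y,\tau)}\phi_{(y,\tau),\rho_0}$ converge to $\int_{\Gamma_s}\Phi_{(y,\tau)}\phi_{(y,\tau),\rho_0}$ uniformly for $(y,\tau)$ in the chosen compact parabolic neighborhood (this uniformity is where one must be slightly careful, using equicontinuity of the family of test functions). Hence for all $j$ large, $\int_{M^j_t}\Phi_{(y,\tau)}\phi_{(y,\tau),\rho_0}\leq\epsilon_0$ on that neighborhood, and the version of Ecker's theorem quoted in the excerpt gives $|A|^2\leq c_0/\rho^2$ on $M^j_t\cap B_\rho(x_0)$ for $t$ in a fixed time interval, uniformly in $j$. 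Covering $K$ by finitely many such balls $B_{\rho_i}(x_i)$ yields a uniform curvature bound for $\{M^j_{-1}\}$ on (a neighborhood of) $K$.

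With uniform $|A|^2$ bounds in hand, standard interior estimates for mean curvature flow (Ecker--Huisken type) bound all higher derivatives of $A$ on slightly smaller balls, and Arzela--Ascoli extracts a subsequence converging smoothly on $K$; the limit must coincide with the multiplicity-one smooth hypersurface $\Gamma_{-1}$ since it already agrees as a Radon measure. The main obstacle I anticipate is the \emph{uniformity} in the passage from the density bound on $\Gamma_s$ to the density bound on $M^j_s$: one needs the convergence $\int_{M^j_s}\Phi_{(y,\tau)}\phi_{(y,\tau),\rho_0}\to\int_{\Gamma_s}\Phi_{(y,\tau)}\phi_{(y,\tau),\rho_0}$ to hold uniformly over the compact set of centers $(y,\tau)$ and over $s$ in a time interval, which requires combining the Hausdorff convergence of Lemma \ref{tan.conv}, the uniform density ratio bound \eqref{dis1}, and equicontinuity of $\{\Phi_{(y,\tau)}\phi_{(y,\tau),\rho_0}\}$; a secondary subtlety is checking that the parabolic neighborhoods and radii can be chosen uniformly as $x_0$ ranges over the compact set $K$, which follows from continuity of the relevant quantities and compactness.
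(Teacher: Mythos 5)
Your proposal is correct and follows essentially the same route as the paper: multiplicity one gives Gaussian density $1$ at regular points of $\Gamma_{-1}$, Ecker's local monotonicity plus continuity give a localized density bound near $(x_0,-1)$, measure convergence together with an equicontinuity argument transfers it to the $M^j$'s, and then the local form of Ecker's regularity theorem, interior estimates for mean curvature flow, a covering of $K$, and Arzela--Ascoli yield smooth subsequential convergence with the limit identified as $\Gamma_{-1}$. The one point where the paper differs is exactly the uniformity-in-time issue you flag as the main obstacle: instead of proving uniform convergence of the localized Gaussian integrals over a whole time interval, the paper verifies the bound only at the single earlier slice $s=-1-\rho_1^2$, obtaining equicontinuity of $g_j(y,\tau)=\int_{M^j_{-1-\rho_1^2}}\Phi_{(y,\tau)}\phi_{(y,\tau),\rho_0}$ from uniform Lipschitz bounds via the first variation formula, and then propagates the estimate to all later times $s\in(\tau-\rho^2,\tau)$ using the monotonicity of $s\mapsto\int_{M^j_s}\Phi_{(y,\tau)}\phi_{(y,\tau),\rho_0}$ from Ecker's local monotonicity formula.
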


  Before proving the lemma, we need the following result:

  \begin{lemm}\label{lemm}
     Suppose $\{\Gamma_s\}$ and $\{M_s^j\}$ are defined as in the above lemma, $\Gamma_{-1}$ is multiplicity one, and $\epsilon > 0$ is any fixed positive constant. Let $Reg(\Gamma_{-1})$ represent the regular part of $\Gamma_{-1}$. Then for any $x_0 \in Reg(\Gamma_{-1})$, there exist  $\rho_0 = \rho_0(x_0) > 0$ and some $\rho \in (0,\rho_0)$ and a sufficiently large $J$, such that
     $$\int_{M^j_s} \Phi_{(y,\tau)} \phi_{(y,\tau),\rho_0} \leq 1 + \epsilon$$
     for all $(y,\tau) \in B_{\rho}(x_0) \times (-1 - \rho^2, -1)$, $s \in (\tau - \rho^2, \tau)$ and $j > J$.
  \end{lemm}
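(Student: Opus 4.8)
\textbf{Proof proposal for Lemma \ref{lemm}.}

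The plan is to exploit the fact that $x_0$ is a \emph{regular} point of $\Gamma_{-1}$, so that near $x_0$ the self-shrinker $\Gamma_{-1}$ looks like a smooth multiplicity-one hypersurface, and then transfer a Gaussian-density bound from the limit $\Gamma_s$ to the approximating flows $M^j_s$ via the measure convergence. First I would fix $x_0 \in \mathrm{Reg}(\Gamma_{-1})$ and choose $\rho_0 = \rho_0(x_0)$ small enough that $\Gamma_s$ is smooth and properly embedded in the parabolic cylinder $B_{\sqrt{1+2n}\,\rho_0}(x_0) \times (-1 - \rho_0^2, -1 + \rho_0^2)$ (here one uses that $\Gamma_s = \sqrt{-s}\,\Gamma_{-1}$, so smoothness of $\Gamma_{-1}$ near $x_0$ propagates to a full space-time neighborhood, up to shrinking $\rho_0$). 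For such a smooth flow, Ecker's local monotonicity (Theorem \ref{locmono}) gives that the locally defined Gaussian density $\Theta(\Gamma_s, y, \tau) = 1$ for every $(y,\tau)$ in a slightly smaller cylinder, since $y$ is then a regular point of $\Gamma_\tau$; more to the point, the last inequality in Theorem \ref{locmono} yields
$$1 = \Theta(\Gamma_s, y, \tau) \leq \int_{\Gamma_s} \Phi_{(y,\tau)} \phi_{(y,\tau),\rho_0}$$
for all $s \in (\tau - \rho_0^2, \tau)$, and by continuity of the localized monotone quantity in the smooth region, for $\rho$ sufficiently small (depending on $\varepsilon$) one gets the reverse bound
$$\int_{\Gamma_s} \Phi_{(y,\tau)} \phi_{(y,\tau),\rho_0} \leq 1 + \tfrac{\varepsilon}{2}$$
uniformly for $(y,\tau) \in B_\rho(x_0) \times (-1-\rho^2,-1)$ and $s \in (\tau - \rho^2, \tau)$.

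Next I would upgrade this to the approximating flows. By the Radon-measure convergence $M^j_s \hookrightarrow \Gamma_s$ together with the uniform volume bound (\ref{dis1}) — which guarantees no mass escapes to infinity and lets us integrate the smooth, rapidly decaying weight $\Phi_{(y,\tau)}\phi_{(y,\tau),\rho_0}$ against the measures — we have, exactly as in (\ref{conv}),
$$\int_{M^j_s} \Phi_{(y,\tau)} \phi_{(y,\tau),\rho_0} \longrightarrow \int_{\Gamma_s} \Phi_{(y,\tau)} \phi_{(y,\tau),\rho_0}$$
as $j \to \infty$. Since $\phi_{(y,\tau),\rho_0}$ is compactly supported and smooth in $(y,\tau)$ as well, and the right-hand side is continuous in $(y,\tau,s)$ over the compact parameter set $\overline{B_\rho(x_0)} \times [-1-\rho^2,-1] \times \{s : \tau-\rho^2 \le s \le \tau\}$, a standard equicontinuity/compactness argument promotes this to \emph{uniform} convergence over that parameter set. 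Hence there is $J$ such that for all $j > J$,
$$\int_{M^j_s} \Phi_{(y,\tau)} \phi_{(y,\tau),\rho_0} \leq \int_{\Gamma_s} \Phi_{(y,\tau)} \phi_{(y,\tau),\rho_0} + \tfrac{\varepsilon}{2} \leq 1 + \varepsilon$$
for all $(y,\tau) \in B_\rho(x_0) \times (-1-\rho^2,-1)$ and $s \in (\tau - \rho^2, \tau)$, which is the claim.

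The main obstacle I anticipate is the passage to \emph{uniform} control in steps one and two: one must be careful that the neighborhood $\rho_0$ on which $\Gamma_s$ is smooth, and the Ecker radius in Theorem \ref{locmono}, can be chosen uniformly for all base points $(y,\tau)$ in a fixed smaller cylinder around $x_0$ — this is where the openness of $\mathrm{Reg}(\Gamma_{-1})$ and the scaling structure $\Gamma_s = \sqrt{-s}\,\Gamma_{-1}$ are essential — and that the convergence $\int_{M^j_s}\Phi_{(y,\tau)}\phi_{(y,\tau),\rho_0} \to \int_{\Gamma_s}\Phi_{(y,\tau)}\phi_{(y,\tau),\rho_0}$ is not merely pointwise in $(y,\tau,s)$ but uniform. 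The multiplicity-one hypothesis on $\Gamma_{-1}$ is what forces the limiting density to be exactly $1$ (rather than an integer $\ge 1$) at regular points, so that the bound $1+\varepsilon$ is genuinely available; without it the argument would only give $\int_{M^j_s}\Phi_{(y,\tau)}\phi_{(y,\tau),\rho_0} \le k + \varepsilon$ for some integer $k$. A secondary technical point is justifying the interchange of limit and integral against $\Phi_{(y,\tau)}$, which is handled exactly as in the proof of Lemma \ref{tan.conv} using (\ref{dis1}) and the Gaussian decay of $\Phi$.
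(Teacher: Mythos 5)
Your first step (smoothness of $\Gamma_s$ near $x_0$, density $=1$ there by multiplicity one, and a bound $\int_{\Gamma_s}\Phi_{(y,\tau)}\phi_{(y,\tau),\rho_0}\leq 1+\epsilon/2$ for $(y,\tau)$ near $(x_0,-1)$) is essentially the paper's first step, which fixes one earlier slice $\Gamma_{-1-\rho_1^2}$ and uses continuity in $(y,\tau)$ there. The genuine gap is in your second step: you transfer the bound to the flows $M^j_s$ by claiming that the convergence $\int_{M^j_s}\Phi_{(y,\tau)}\phi_{(y,\tau),\rho_0}\to\int_{\Gamma_s}\Phi_{(y,\tau)}\phi_{(y,\tau),\rho_0}$ is \emph{uniform} over the whole parameter set, including $s$ arbitrarily close to $\tau$, "by a standard equicontinuity/compactness argument." That argument does not go through: as $s\nearrow\tau$ the kernel $\Phi_{(y,\tau)}(\cdot,s)$ concentrates to a Dirac mass at $y$, so the family of test functions is neither uniformly bounded nor equicontinuous near $s=\tau$; correspondingly, the limit quantity extends to $s=\tau$ as the Gaussian density, which equals $1$ for $y\in\Gamma_\tau$ and $0$ for $y\notin\Gamma_\tau$, i.e.\ a discontinuous function of $y$ on $\overline{B_\rho(x_0)}$. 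If the $g_j$ were equicontinuous up to $s=\tau$ the limit would be continuous there, so no such uniform estimate near $s=\tau$ can come from equicontinuity alone, and measure convergence by itself only gives pointwise (or locally uniform for $s$ bounded away from $\tau$) convergence.

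The paper closes exactly this hole with a monotonicity argument you did not use on the approximating flows: since each $\{M^j_s\}$ is a smooth properly embedded flow, Ecker's local monotonicity (Theorem \ref{locmono}) makes $s\mapsto\int_{M^j_s}\Phi_{(y,\tau)}\phi_{(y,\tau),\rho_0}$ non-increasing, so it suffices to verify the bound $\leq 1+\epsilon$ at the \emph{single} fixed time $s=-1-\rho_1^2$ (well separated from $\tau$), and it then propagates automatically to all $s\in(\tau-\rho^2,\tau)$. Uniformity in $(y,\tau)$ at that one slice is obtained not by abstract compactness but from a uniform Lipschitz bound on $g_j(y,\tau)=\int_{M^j_{-1-\rho_1^2}}\Phi_{(y,\tau)}\phi_{(y,\tau),\rho_0}$ coming from the first variation formula and scale-invariant local area bounds, combined with the Radon-measure convergence at that slice. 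Your argument becomes correct if you replace the claimed uniform-in-$s$ convergence by this comparison with a fixed earlier time via monotonicity of the localized Gaussian integral for $M^j_s$; as written, the step bounding $\int_{M^j_s}\Phi_{(y,\tau)}\phi_{(y,\tau),\rho_0}$ for $s$ near $\tau$ is unjustified.
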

  \begin{proof}
    Because $x_0$ is a regular point of $\Gamma_{-1}$ and $\{\Gamma_s\}$ is a self-shrinking mean curvature flow, we can find a $\rho_0 = \rho_0(x_0) > 0$ such that $\{\Gamma_s\}$ is smooth on $B_{\sqrt{1+2n}\rho_0}(x_0) \times (-1 - \rho_0 , \-1)$.

    Since $\Gamma_{-1}$ is multiplicity one, so $\Theta(\Gamma_s, x_0, -1) = 1$. By Theorem {\ref{locmono}}, we can find a $\rho_1 \in (0,\rho_0]$ such that
    $$\int_{\Gamma_{-1 - \rho^2_{1}}} \Phi_{(x_0 , -1)} \phi_{(x_0, -1),\rho_0} \leq 1+ \frac{1}{4} \epsilon.$$
    The continuity of
    $$(y,\tau) \longrightarrow \int_{\Gamma_{-1 - \rho_1^2}} \Phi_{(y,\tau)}\phi_{(y,\tau),\rho_0}$$
    implies that for some $\rho \in (0,\rho_0)$ and all $(y,\tau) \in B_{\rho}(x_0) \times (-1 -\rho^2, -1)$,
    \begin{equation}\label{gamma}
    \int_{\Gamma_{-1 - \rho_1^2}}\Phi_{(y, \tau)}\phi_{(y,\tau),\rho_0} \leq 1 + \frac{1}{2}\epsilon
    \end{equation}
    and furthermore $(\tau - \rho^2, \tau) \subset (-1 - \rho_1^2 , -1)$.
    Define a sequence of functions $g_j$ by
    $$g_j(y,\tau) = \int_{M^j_{-1 - \rho_1^2}}\Phi_{(y, \tau)}\phi_{(y,\tau),\rho_0}$$
    We will only consider the $g_j's$ on the region $\overline{B}_{\rho}(x_0) \times [-1 -\rho^2, -1]$, it follows from the first variation formula, see lemma 3.7 in \cite{CM}, that $g_j$'s are uniformly Lipschitz in this region with
    $$\sup_{\overline{B}_{\rho}(x_0) \times [-1 -\rho^2, -1]} |\nabla_{y,\tau} g_j| < C,$$
    where $C$ depends on $\rho$ and the scale-invariant local area bounds for the $M^j_{-1 - \rho_1^2}$'s which are uniformly bounded. Since $M^j_{-1 - \rho_1^2}$'s converge to $\Gamma_{-1 - \rho_1^2}$ as Radon measures and $\Gamma_{-1 - \rho_1^2}$ satisfies (\ref{gamma}), so there exists some $J$ sufficiently large so that for all $j > J$ we have
   $$\int_{M^j_{-1 - \rho_1^2}} \Phi_{(y,\tau)} \phi_{(y,\tau),\rho_0} \leq 1 + \epsilon$$
     for all $(y,\tau) \in B_{\rho}(x_0) \times (-1 - \rho^2, -1)$. Since by lemma \ref{locmono},
     $$s \mapsto \int_{M_s^j} \Phi_{(y,\tau)} \phi_{(y,\tau),\rho_0} $$
     is non-increasing we obtain
     $$\int_{M_s^j} \Phi_{(y,\tau)} \phi_{(y,\tau),\rho_0} \leq \int_{M^j_{-1 - \rho_1^2}} \Phi_{(y,\tau)} \phi_{(y,\tau),\rho_0} \leq 1 + \epsilon$$
     for all  $(y,\tau) \in B_{\rho}(x_0) \times (-1 - \rho^2, -1)$, $s \in (\tau - \rho^2, \tau)$ and $j > J$.
  \end{proof}

  \textit{Proof of lemma \ref{mainlemma}}: By Lemma \ref{lemm} and Theorem \ref{Ecker}, we know that for any $x_0 \in Reg(\Gamma_{-1})$, there is a positive $\rho(x_0)$ and a sufficiently large number $J = J(x_0)$ such that $\{M^j_{-1}\}$ have unform bound on second fundamental form. From the curvature estimate of mean curvature flow, we can also get unform bound on higher derivatives of second fundamental form of $\{M^j_{-1}\}$. Therefore, for any compact subset $K \subset Reg(\Gamma_{-1})$ we can choose a subsequence of $\{M^j_{-1}\}$ denoted by $\{M^{j_i}_{-1}\}$, such that $\{M^{j_i}_{-1}\}$ converge smoothly to $\{\Gamma_{-1}\}$ on $K$.

\subsection{Proof of Theorem \ref{main2}}We will prove Theorem \ref{main2} by mean curvature flows. Suppose $M_0$ is a hypersurface in $\textbf{R}^{n+1}$ satisfying all conditions in Theorem \ref{main2}, and denote $\{M_t\}$ is the mean curvature flow starting from $M_0$ before the first singular time. Since mean curvature $H > 0$ on $M_0$, by Theorem 4.3 in \cite{H}, we obviously have the following lemma:
\begin{lemm}\label{pinchlemma}
  Suppose $\{M_t\}_{t \in [0,T)}$ is a mean curvature flow  before the first singular time starting from $M_0$. If there is a constant $C$ such that $|A|^2 \leq CH^2$ on $M_0$, then we have
  \begin{equation}\label{pinch}
    |A|^2(x,t) \leq CH^2(x,t)
  \end{equation}
  holds on $M_t$ for every $t \in [0,T)$.
\end{lemm}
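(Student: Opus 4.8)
The final statement to prove is Lemma \ref{pinchlemma}: a pinching condition $|A|^2 \le C H^2$ on the initial hypersurface $M_0$ is preserved along the mean curvature flow $\{M_t\}_{t\in[0,T)}$ up to the first singular time. The strategy is the standard tensor maximum principle argument of Huisken, exactly as in Theorem 4.3 of \cite{H}, which is why the excerpt says the lemma follows "obviously." First I would recall the evolution equations under mean curvature flow: $\partial_t H = \Delta H + |A|^2 H$ and $\partial_t |A|^2 = \Delta |A|^2 - 2|\nabla A|^2 + 2|A|^4$. From these one computes the evolution of the quantity $f = |A|^2 - C H^2$, or equivalently of the ratio $|A|^2/H^2$ on the region where $H>0$ (note $H>0$ is itself preserved, since $H$ satisfies a scalar parabolic equation with zero-order coefficient $|A|^2 \ge 0$, so by the scalar maximum principle $\min_{M_t} H$ is non-decreasing and stays positive on $[0,T)$ as long as the flow is smooth).

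The key computation is that $g := |A|^2/H^2$ satisfies an evolution equation of the form
$$\partial_t g = \Delta g + \frac{2}{H}\langle \nabla H, \nabla g\rangle - \frac{2}{H^4}\left( H^2|\nabla A|^2 - |A|^2|\nabla H|^2 \right).$$
The crucial structural fact, also due to Huisken, is the gradient inequality $H^2|\nabla A|^2 \ge |A|^2 |\nabla H|^2$ — more precisely $|\nabla A|^2 \ge \tfrac{3}{n+2}|\nabla H|^2$, but even the weaker Kato-type bound $|\nabla A|^2 \ge \tfrac{1}{n}|\nabla H|^2$ combined with $|A|^2 \ge \tfrac{1}{n}H^2$ suffices to make the last term nonpositive. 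Hence $g$ satisfies $\partial_t g \le \Delta g + \langle b, \nabla g\rangle$ for the (locally bounded, on any time interval where the flow is smooth) vector field $b = \tfrac{2}{H}\nabla H$, a reaction-free parabolic inequality. Since $M_t$ is compact, the maximum principle gives $\max_{M_t} g \le \max_{M_0} g \le C$, which is exactly \eqref{pinch}.

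In the order I would carry it out: (1) note $H>0$ is preserved by the scalar maximum principle, so the ratio $g=|A|^2/H^2$ is a well-defined smooth function on $M_t$ for $t\in[0,T)$; (2) write down Huisken's evolution equations for $H$ and $|A|^2$ and derive the evolution equation for $g$ above, tracking that the reaction terms cancel and only the gradient terms remain; (3) invoke the divergence-type / Kato inequality $|\nabla A|^2 \ge \tfrac{1}{n}|\nabla H|^2$ together with $|A|^2 \ge \tfrac1n H^2$ to conclude the non-gradient lower-order term has a sign, giving $\partial_t g \le \Delta g + \langle b,\nabla g\rangle$; (4) apply the maximum principle on the compact manifolds $M_t$ over $[0,t]$ for any $t<T$ to get $\sup_{M_t} g \le \sup_{M_0} g \le C$.

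The only real subtlety — and the step I would flag as the main obstacle if one wanted to be fully rigorous rather than cite \cite{H} — is handling the factor $1/H$ in the drift coefficient $b = 2\nabla H/H$: although $H>0$ on each compact $M_t$, one must know $\inf_{M_t} H$ does not degenerate as $t$ approaches a time before $T$, which follows because on $[0,t_1]$ with $t_1<T$ the flow stays smooth and compact so $H$ attains a positive minimum; thus $b$ is bounded on each such parabolic slab and the maximum principle applies with no boundary issues. Everything else is the routine tensor-maximum-principle bookkeeping of Huisken, so I would simply state the evolution equation for $g$, point to the gradient inequality, and conclude; this is precisely what "by Theorem 4.3 in \cite{H}" is meant to encapsulate.
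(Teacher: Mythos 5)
The paper offers no argument of its own here: it simply invokes Theorem 4.3 of Huisken \cite{H}, and your proposal reconstructs precisely the standard maximum-principle argument that this citation stands for (preservation of $H>0$, evolution of $g=|A|^2/H^2$, maximum principle on the compact slices). So the route is the intended one. However, two steps as you wrote them are incorrect and would need repair. First, your displayed evolution equation for $g$ is missing a gradient cross term; the correct identity (Huisken's computation for $f_\sigma=|A|^2H^{\sigma-2}$ at $\sigma=0$) is
\begin{equation*}
\partial_t g=\Delta g+\frac{2}{H}\langle\nabla H,\nabla g\rangle-\frac{2}{H^4}\,\bigl|H\nabla_i h_{kl}-\nabla_i H\, h_{kl}\bigr|^2 ,
\end{equation*}
and the square expands to $H^2|\nabla A|^2-H\langle\nabla|A|^2,\nabla H\rangle+|A|^2|\nabla H|^2$, not to $H^2|\nabla A|^2-|A|^2|\nabla H|^2$.

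Second, and more seriously, your justification of the sign is backwards. From $|\nabla A|^2\ge\frac1n|\nabla H|^2$ you get $H^2|\nabla A|^2\ge\frac1n H^2|\nabla H|^2$, and to reach $H^2|\nabla A|^2\ge|A|^2|\nabla H|^2$ you would need $|A|^2\le\frac1n H^2$, whereas the true pointwise inequality is $|A|^2\ge\frac1n H^2$; so the Kato-type bound combined with $|A|^2\ge\frac1n H^2$ does not give the claimed sign, and indeed $H^2|\nabla A|^2\ge|A|^2|\nabla H|^2$ is not a valid pointwise inequality in general. Fortunately no such inequality is needed: in the corrected evolution equation the last term is a complete square, hence nonpositive everywhere, so $g$ satisfies a reaction-free parabolic inequality directly. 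Equivalently, if you insist on your form of the term, note that at a spatial maximum of $g$ one has $H\nabla|A|^2=2|A|^2\nabla H$, and there the complete square reduces exactly to $H^2|\nabla A|^2-|A|^2|\nabla H|^2\ge0$, which is the inequality you wanted, but only at critical points of $g$. With this correction, the rest of your argument (preservation of $H>0$ by the scalar maximum principle, boundedness of the drift $2\nabla H/H$ on each compact slab $[0,t_1]$ with $t_1<T$, and the maximum principle on the closed hypersurfaces $M_t$) goes through and yields \eqref{pinch}.
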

For preparation of proving Theorem \ref{main2}, we also need the following two important theorems.
\begin{theo}[see \cite{CM}] \label{CM1}
  $\textbf{S}^{k} \times \textbf{R}^{n-k}$ are the only smooth complete embedded self-shrinkers without boundary, with polynoimal volume growth, and $H \geq 0$ in $\textbf{R}^{n+1}$
\end{theo}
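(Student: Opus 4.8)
The plan is to run the Colding--Minicozzi strategy, reducing the statement to Huisken's classification of compact mean convex self-shrinkers by a maximum-principle argument together with a splitting argument. Write $\mathcal{L} = \Delta_\Gamma - \frac12\langle x, \nabla_\Gamma \cdot \rangle + |A|^2 + \frac12$ for the Jacobi operator of the Gaussian-weighted area functional on $\Gamma$. First I would record the two basic self-shrinker identities: $\mathcal{L}H = H$, and, for every fixed vector $\omega \in \textbf{R}^{n+1}$, $\mathcal{L}\langle \omega, \nu\rangle = \frac12\langle \omega, \nu\rangle$. The polynomial volume growth hypothesis is what makes the analysis work: it guarantees $\int_\Gamma e^{-|x|^2/4} < \infty$ and, via the weighted Stokes theorem of \cite{CM}, legitimizes every integration by parts (after a Gaussian cutoff) against the weight $e^{-|x|^2/4}$ used below.

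Next comes a dichotomy. Rewriting $\mathcal{L}H = H$ as $\left(\Delta_\Gamma - \frac12\langle x,\nabla_\Gamma\cdot\rangle\right)H = \left(\frac12 - |A|^2\right)H$ and using $H \ge 0$, the strong maximum principle forces either $H \equiv 0$ on $\Gamma$ or $H > 0$ everywhere. In the first case the self-shrinker equation $2H = \langle X, \nu\rangle$ gives $\langle X, \nu\rangle \equiv 0$, so $\Gamma$ is invariant under dilations about the origin; being smooth and embedded, such a minimal cone is a hyperplane through $0$, which is the $k = 0$ case of the statement.

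In the remaining case $H > 0$ I would split off lines: if some fixed unit vector $\omega$ satisfies $\langle\omega,\nu\rangle \equiv 0$, then $\omega$ is everywhere tangent to $\Gamma$, so $\Gamma = \Gamma' \times \textbf{R}\omega$ with $\Gamma' \subset \textbf{R}^n$ again a smooth complete embedded self-shrinker with $H > 0$ and polynomial volume growth, and one recurses. After splitting off a maximal number $n-k$ of such directions, the residual factor $\Gamma'^{\,k} \subset \textbf{R}^{k+1}$ is a self-shrinker with $H > 0$ admitting no further line splitting; one then wants to conclude that $\Gamma'$ is compact with bounded second fundamental form, after which Huisken's classification \cite{H} of compact mean convex self-shrinkers identifies $\Gamma' = \textbf{S}^k(\sqrt{2k})$, and hence $\Gamma = \textbf{S}^k \times \textbf{R}^{n-k}$.

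I expect the main obstacle to be precisely this last point: obtaining the a priori bound on $|A|$ and ruling out non-flat, non-compact self-shrinkers with $H > 0$ and polynomial volume growth, since Huisken's theorem is usually stated with a curvature bound built in. The natural tool is a maximum principle at infinity for the drift operator: one studies a scale-invariant quantity such as $|A|^2/H^2$ (or the ratio of a single principal curvature to $H$), shows it satisfies a favorable differential inequality with respect to $\mathcal{L}$, and exploits the finiteness of the Gaussian weight together with the bound $0 < H \le |X|/2$ (immediate from the shrinker equation) to conclude boundedness; the equality/rigidity case of that estimate is what ultimately forces either a further line to split or the residual factor to be a round sphere. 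Carefully justifying each integration by parts against the Gaussian weight — the one place where polynomial volume growth is genuinely indispensable — is the recurring technical subtlety throughout.
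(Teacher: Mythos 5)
First, a point of comparison: the paper does not prove this statement at all --- it is quoted from Colding--Minicozzi (Theorem 0.17 of \cite{CM}) and used as a black box --- so the only meaningful benchmark is their proof, whose overall outline you have in fact reproduced correctly: the identity $\mathcal{L}H=H$, the dichotomy $H\equiv 0$ versus $H>0$ via the strong maximum principle for the drift operator, the hyperplane conclusion when $H\equiv 0$ (a smooth complete embedded minimal cone is a hyperplane through the origin), the role of polynomial volume growth in making every Gaussian-weighted integration by parts legitimate, and the idea that the scale-invariant ratio $|A|/H$ and its rigidity case are what drive the classification when $H>0$.

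The genuine gap is the one you flag yourself: everything after ``I expect the main obstacle to be precisely this last point'' is a description of what must be proved, not a proof, and that is where essentially the entire content of the theorem lies. In the case $H>0$ one must (i) derive the Simons-type inequality $\mathcal{L}|A|\ge |A|$ and establish weighted integrability of $|A|$ and $|\nabla A|$ against $e^{-|x|^2/4}$ from polynomial volume growth, (ii) compare $|A|$ with the positive eigenfunction $H$ of $\mathcal{L}$ to conclude that $|A|/H$ is constant, and (iii) exploit the equality cases (Kato and Cauchy--Schwarz) to force the parallel structure of $A$ and hence the splitting $\textbf{S}^k\times\textbf{R}^{n-k}$; none of this is carried out. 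Moreover, your alternative framing --- split off lines whenever some $\omega$ satisfies $\langle\omega,\nu\rangle\equiv 0$, then claim the residual factor with no splitting direction is compact with bounded $|A|$ --- cannot be closed as stated: the absence of a globally tangent direction does not by itself yield compactness or a curvature bound, and no argument is offered for that implication; in \cite{CM} the Euclidean splitting is an \emph{output} of the rigidity analysis, not an input one recurses on. Finally, the compact case should be attributed to Huisken's 1990 classification of closed self-shrinkers with $H\ge 0$ (or to the compact case treated directly in \cite{CM}); the reference \cite{H} in this paper is Huisken's 1984 paper on convex hypersurfaces and does not contain that classification. As it stands, your proposal is a correct road map of the known argument with its central analytic steps left unproven.
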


\begin{theo}[see \cite{CIMW}] \label{CM2}
  If $\Gamma \subset \textbf{R}^{n+1}$ is a weak solution of the self-shrinker equation (\ref{selfshrinker}), $\lambda(\Gamma) < \frac{3}{2}$, and there is a constant $C >0$ such that
  $$|A| \leq CH $$
  on the regular set $ Reg(\Gamma)$, then $\Gamma$ is smooth.
\end{theo}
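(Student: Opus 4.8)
The plan is to show that the Gaussian density of $\Gamma$ equals $1$ at every point; by the partial regularity theorem of White and Ecker (Theorem \ref{Ecker}, in the form that requires no a priori smoothness) this forces every point of $\Gamma$ to be a regular point, so $\Gamma$ is smooth. The underlying mechanism is that the first-order term $H$ in the shrinker equation (\ref{selfshrinker}) is lower order under blow-up, so that every tangent cone of $\Gamma$ at a finite point is a \emph{minimal} cone; the scale-invariant pinching $|A|\le CH$ then forces that cone to be totally geodesic, and the entropy bound $\lambda(\Gamma)<3/2$ forces it to consist of a single sheet.

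Fix $x_0\in\Gamma$ and consider the rescalings $\Gamma_j=\lambda_j^{-1}(\Gamma-x_0)$ with $\lambda_j\searrow 0$ (equivalently, the parabolic rescalings of the self-shrinking flow $\{\sqrt{-s}\,\Gamma\}_{s<0}$ at $(x_0,-1)$). Since $\lambda(\Gamma)<2$, the entropy bound yields uniform Euclidean area ratios and multiplicity one, so a subsequence converges as varifolds to an integral varifold $C$. By Huisken's monotonicity formula for $\{\sqrt{-s}\,\Gamma\}$ the rescalings are asymptotically self-similar, so $C$ is a cone; and $C$ is stationary, because the rescaled mean curvature $H_{\Gamma_j}(y)=\lambda_j H_\Gamma(\lambda_j y+x_0)=\frac{\lambda_j}{2}\langle\lambda_j y+x_0,\nu\rangle$ tends to $0$ locally uniformly. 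Thus $C$ is a minimal cone with $\lambda(C)\le\lambda(\Gamma)<3/2$.

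Next I would transfer the pinching to $C$. At any $p\in\mathrm{Reg}(C)$ the varifold $C$ is a smooth embedded minimal hypersurface near $p$, hence has Gaussian density $1$ there; by upper semicontinuity of the density and Ecker's $\epsilon$-regularity (Theorem \ref{Ecker}), the $\Gamma_j$ have uniformly bounded second fundamental form near $p$, so --- exactly as in the proof of Lemma \ref{mainlemma} --- they converge smoothly to $C$ on a neighbourhood of $p$. On these smooth graphs $|A_{\Gamma_j}|\le CH_{\Gamma_j}\to 0$, so $|A_C|\equiv 0$ on $\mathrm{Reg}(C)$; hence each component of $\mathrm{Reg}(C)$ is an open piece of a hyperplane through the vertex of $C$. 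Since $\mathrm{Reg}(C)$ is dense in $\mathrm{supp}(C)$, the cone $C$ is a union, with multiplicities, of hyperplanes through its vertex, and its density at the vertex is the number of these sheets counted with multiplicity. That number is $\le\lambda(C)<3/2$, so it equals $1$: $C$ is a multiplicity-one hyperplane, and therefore $\Theta(\Gamma,x_0)=1$. By Theorem \ref{Ecker}, $x_0$ is a regular point of $\Gamma$, and since $x_0$ was arbitrary this proves $\Gamma$ is smooth.

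The step I expect to be the main obstacle is making rigorous that the blow-up $C$ really is a minimal cone with dense regular set: this rests on the self-shrinker form of Huisken's monotonicity (for the asymptotic self-similarity and cone structure) together with the basic regularity theory for integral Brakke flows and stationary integral varifolds (to know $C$ is stationary and that $\mathrm{Reg}(C)$ is dense in $\mathrm{supp}(C)$); these facts are standard but need to be quoted carefully. The only place the entropy hypothesis enters is the last sentence of the previous paragraph, ruling out $C$ being two or more hyperplanes through a common vertex --- and there $\lambda(\Gamma)<3/2$ (indeed anything $<2$) suffices.
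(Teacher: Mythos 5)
The paper offers no proof of Theorem \ref{CM2}; it is quoted verbatim from \cite{CIMW}, so there is no internal argument to compare against. Your proposal is, in outline, precisely the argument of the cited source: blow up at a putative singular point $x_0$, observe that the shrinker mean curvature $\frac{1}{2}\langle x,\nu\rangle$ scales away so that the tangent cone $C$ is a stationary integral cone with $\lambda(C)\le\lambda(\Gamma)<\frac{3}{2}$, pass the scale-invariant pinching $|A|\le CH$ to the limit on $\mathrm{Reg}(C)$ to get $|A_C|\equiv 0$, use integrality of the density at the vertex together with the entropy bound to conclude $C$ is a single multiplicity-one hyperplane, and finish with $\epsilon$-regularity. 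The one step you should not leave as a parenthetical is the appeal to ``Theorem \ref{Ecker} in the form that requires no a priori smoothness'': as stated in this paper (and in Ecker's book) that theorem assumes the flow is already smooth and properly embedded in $U\times(t_1,t_0)$, which is exactly what is being proved, and the $\Gamma_j$ to which you apply it near regular points of $C$ are rescalings of the possibly singular $\Gamma$. The correct tool is elliptic rather than parabolic: $\Gamma$ is an integral varifold whose generalized mean curvature satisfies $|\vec H|\le |x|/2$ locally, so Allard's regularity theorem applies once a density ratio is close to $1$; this both supplies the uniform curvature bounds near points of $\mathrm{Reg}(C)$ needed to transfer the pinching (via elliptic bootstrapping of the shrinker equation) and yields regularity at $x_0$ from $\Theta_C(0)=1$ and monotonicity. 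With that substitution the argument is complete and agrees with \cite{CIMW}.
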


\textit{Proof of Theorem \ref{main2}}: Assume $\{M_t\}_{t \in [0,T)}$ is a mean curvature flow starting from $M_0$, $T$ is the first singular time and $x_0$ is a singular point in $\textbf{R}^{n+1}$. By Lemma \ref{tang.exist} and Lemma \ref{tan.conv}, we know that there exist a tangent flow $\{\Gamma_s\}_{s<0}$ at $(x_0 , T)$ and a corresponding sequence $\{M^j_s\}$ of parabolic transformations of $\{M_t\}$.
By Lemma \ref{pinchlemma} and inequality (\ref{pinch}) is scaling-invariant, we get that for every $j$,
$$|A_j| \leq  C H_j$$
on $M^j_s$, where $A_j$ and $H_j$ are the second fundamental form and mean curvature on $M^j_s$ respectively. Combining this with Lemma \ref{mainlemma}, we have
$|A| \leq C H $
on the regular part of $\Gamma_{-1}$. By Theorem \ref{CM1} and Theorem \ref{CM2}, we know that
$\Gamma_{-1}$ must be of the form $\textbf{S}^{k} \times \textbf{R}^{n-k}$. Since entropy is non-increasing under mean curvature flow, scaling non-invariant and lower semi-continuous under limits, then we have
$$\lambda(\Gamma_{-1}) \leq \min\{\lambda(\textbf{S}^{n-1}), \frac{3}{2}\}$$
If $\lambda(\Gamma_{-1}) = \min\{\lambda(\textbf{S}^{n-1}), \frac{3}{2}\}$, we know that the entropy $\lambda(M_t)$ is invariant under $\{M_t\}$, By Huisken's monotonicity formula, $M_0$ must be a compact self-shrinker  with $H > 0$, then from Theorem \ref{CM1} we know that $M_0$ must be a round sphere.

If $\lambda(\Gamma_{-1}) < \min\{\lambda(\textbf{S}^{n-1}), \frac{3}{2}\}$, By Theorem \ref{CM1} and Theorem \ref{CM2}, we know that $\Gamma_{-1}$ must be $\textbf{S}^{n}$.

Using Lemma \ref{mainlemma} again, we have for sufficient large $j$, $M^j_{-1}$ can be written as a smooth graph over $\Gamma_{-1}$. Since $\Gamma_{-1}$ is a round sphere, then we have for sufficient large $j$, we have $M^j_{-1}$ is diffeomorphic to a round sphere. By the definition of $M^j_{-1}$, we know that $M^j_{-1} = \lambda_j^{-1}(M_{T - \lambda_j^2} - x_0)$. Since mean curvature flow $\{M_t\}$ is smooth up to the first singular time, then $M_0$ is diffeomorphic to a round sphere, and we complete the proof of Theorem \ref{main2}.

\begin{rema}
We think entropy may give some information of the hypersurface, so this work is attempt to study the singularities of mean curvature flow by entropy. From theorem \ref{main2} we can see that if the entropy of a mean convex compact hypersurface is no more than $\min\{\lambda(\textbf{S}^{n-1}), \frac{3}{2}\}$ , then it is diffeomorphic to a round sphere. We believe that if the entropy is a little higher we can also get some classification result as we mentioned at the beginning of this paper.
\end{rema}

 \end{document}